\documentclass[11pt,reqno]{amsart}

\usepackage[utf8]{inputenc}
\usepackage[T1]{fontenc}
\usepackage{amsmath,amssymb,amsthm,mathtools}
\usepackage{microtype}
\usepackage{enumitem}
\usepackage{booktabs}
\usepackage{array}
\usepackage{hyperref}
\hypersetup{colorlinks=true,linkcolor=black,citecolor=black,urlcolor=black}

\theoremstyle{plain}
\newtheorem{theorem}{Theorem}[section]
\newtheorem{proposition}[theorem]{Proposition}
\newtheorem{lemma}[theorem]{Lemma}
\newtheorem{corollary}[theorem]{Corollary}

\theoremstyle{definition}
\newtheorem{definition}[theorem]{Definition}
\newtheorem{example}[theorem]{Example}

\theoremstyle{remark}
\newtheorem{remark}[theorem]{Remark}

\newcommand{\R}{\mathbb{R}}
\newcommand{\C}{\mathbb{C}}
\newcommand{\HH}{\mathbb{H}}
\newcommand{\OO}{\mathbb{O}}
\newcommand{\Z}{\mathbb{Z}}
\newcommand{\Q}{\mathbb{Q}}
\newcommand{\golden}{\varphi}
\newcommand{\IcosianRing}{\mathbb{I}}
\newcommand{\Order}{\mathcal{O}}
\newcommand{\Shell}{\mathcal{S}}
\newcommand{\Axis}{\mathcal{A}}
\newcommand{\Balanced}{\mathcal{D}}
\newcommand{\NN}{N}
\newcommand{\BB}{B}

\newcommand{\conv}{\mathrm{conv}}
\newcommand{\Aut}{\mathrm{Aut}}
\newcommand{\Mouf}{M}
\newcommand{\HI}{\mathrm{HI}}
\newcommand{\Hopfmap}{h}

\title{Integral Planes and Unit-Norm Polytopes}

\author{Daniele Corradetti}
\address{Grupo de F\'isica Matem\'atica\\
Instituto Superior T\'ecnico\\
Av.\ Rovisco Pais, 1049-001 Lisboa, Portugal}
\email{danielecorradetti@tecnico.ulisboa.pt}
\date{\today}

\begin{document}

\maketitle

\begin{abstract}
We introduce and study integral planes associated with crystallographic and non-crystallographic integral systems in real composition algebras. For an integral order $\Order$ in such an algebra we define the plane $\Order^{2}$ with quadratic form $Q(x,y)=\NN(x)+\NN(y)$, the axis shell, the balanced shell, and the corresponding unit-normalised spherical polytopes. For ten crystallographic orders we recover, in one uniform construction, the orthogonal-direct-sum root systems $2A_{1}$, $A_{2}\oplus A_{2}$, $4A_{1}$, $D_{4}\oplus D_{4}$, $16A_{1}$, and $E_{8}\oplus E_{8}$ (with classical-polytope realisations including the square, the 16-cell, the 24-cell, and the Gosset polytope $4_{21}$); for two non-crystallographic orders we obtain $H_{2}\oplus H_{2}$ (decagonal tegum) and $H_{4}\oplus H_{4}$ (600-cell tegum) over $\Z[\golden]$. We prove a rank-obstruction theorem that closes, unconditionally and by a purely Coxeter-theoretic argument, the existence question for an indecomposable rank-eight golden octonion order: no such order can exist. On the balanced shell side, we identify the genuine algebraic Hopf map $\Hopfmap_{A}(a,b)=(2a\bar b,\NN(a)-\NN(b))$ and prove that its restriction to the balanced shell is a finite principal fibration of the unit loop, valid both for the associative case and for the alternative Moufang case. 
\end{abstract}

\noindent\textit{Keywords:} integral plane $\cdot$ composition algebra $\cdot$ root-shell system $\cdot$ icosian ring $\cdot$ finite principal fibration $\cdot$ Moufang loop $\cdot$ rank obstruction.

\medskip\noindent\textit{MSC2020:} 52B15, 17A75, 11H06, 20N05; 17D05, 51A45, 52B11, 11R52, 20F55.

\setcounter{tocdepth}{1}
\tableofcontents

\section{Introduction and Motivation}

Unital composition algebras, also known as Hurwitz algebras, are indispensable tools in geometry. Even more so the four real division Hurwitz algebras that, up to isomorphism, are the reals $\R$, the complexes $\C$, the quaternions $\HH$ and the octonions $\OO$. Inside each of them lives, in a canonical sense, an arithmetic skeleton: the Gaussian integers $\Z[i]$, the Eisenstein integers $\Z[\omega]$, the Hurwitz quaternions, and the Coxeter--Dickson integral octonions. These four arithmetic skeletons are familiar to algebraists, lattice theorists and crystallographers alike; they sit at the intersection of normed algebra, integral quadratic-form theory, and root-system combinatorics.

A more recent development, in part motivated by aperiodic order and the geometry of icosahedral symmetry (see Patera~\cite{Patera} for a survey), enlarges the picture. If one is willing to let the coefficient ring grow from $\Z$ to $\Z[\golden]$, where $\golden=(1+\sqrt{5})/2$, then the non-crystallographic Coxeter systems $H_{2}$, $H_{3}$, $H_{4}$ enter the same arithmetic stage; the relevant orders are the cyclotomic ring $\Z[\zeta_{10}]$ inside $\C$ and the icosian ring $\IcosianRing$ inside $\HH$. Building on the recent axiomatic framework of non-crystallographic integral root-shell systems over $\Z[\golden]$ established in~\cite{Corradetti2605}, we ask in this paper a complementary question: what spherical polytopes appear if, instead of treating $\Order$ as an algebra in its own right, one considers the integral \emph{plane} $\Order^{2}$ with the natural quadratic form $Q(x,y)=\NN(x)+\NN(y)$, and reads off the root system from the resulting unit shell?

The present work proposes a single construction that organises these crystallographic and non-crystallographic arithmetic skeletons into one shared geometric object. Given an integral order $\Order$ in a real composition algebra, with quadratic norm $\NN$ and polar form $\BB$, we consider the \emph{integral plane}
\[
\Order^{2}=\{(x,y):x,y\in\Order\},
\qquad
Q(x,y)=\NN(x)+\NN(y).
\]
This pair $(\Order^{2},Q)$ is an integral quadratic $R$-module whose unit-norm shell $\Shell_{1}(\Order^{2})$ is a finite spherical polytope. The same definition simultaneously yields the classical root-polytope doublings, $2A_{1}$ for $\Z$, $A_{2}\oplus A_{2}$ for the Eisenstein integers, $D_{4}\oplus D_{4}$ for the Hurwitz quaternions, $E_{8}\oplus E_{8}$ for the Coxeter--Dickson integral octonions, and their non-crystallographic counterparts $H_{2}\oplus H_{2}$ and $H_{4}\oplus H_{4}$.

In order to give an introduction we have to say that three themes run through the paper.

The first is the \emph{doubling theme}. The integral plane is a uniform construction that produces, in one stroke, the orthogonal-direct-sum root systems $R\oplus R$ for the root systems $R$ realisable as unit shells of integral orders. We separate, throughout, the algebraic structure (root system over $R$) from the geometric one (classical polytope, in the sense of Coxeter and Grünbaum). The unit shell of the Gaussian order, for example, is the root system $2A_{1}$ (two orthogonal copies of $A_{1}$, eight elements through length-doubling and centrality); as a polytope, its convex hull is the square, also known as the 2-cross polytope $\beta_{2}$. This distinction is sharpened in our synoptic Table~\ref{tab:main}, which carries one column for the root system and a separate column for the classical polytope.

The second is the \emph{obstruction theme}. Once the integral plane is in hand, a natural question is whether a strong indecomposable rank-eight golden octonion order can play the role of $\OO_{CD}$ in the non-crystallographic case. Earlier bounded computational searches reported in~\cite{Corradetti2605} were negative within their range. Here we close the question \emph{unconditionally}, by a purely Coxeter-theoretic argument: an indecomposable non-crystallographic root-shell system over $\Z[\golden]$ in the sense of Definition~\ref{def:root-shell-system} has rank at most four; in particular, no rank-eight indecomposable golden octonion order exists. We dedicate \S\ref{sec:rank-obstruction} to this rank-obstruction theorem, which is the main \emph{negative} result of the article.

The third is the \emph{fibration theme}. The balanced shell $\Balanced(\Order)=\tfrac{1}{\sqrt 2}\,\Shell_{1}(\Order)\times\Shell_{1}(\Order)$ carries the discrete restriction of the algebraic Hopf map $\Hopfmap_{A}(a,b)=(2a\bar b,\NN(a)-\NN(b))$ from $S^{2d-1}\subset A^{2}$ to $S^{d}\subset A\oplus\R$. Restricted to $\Balanced(\Order)$ the second coordinate vanishes and the image lies on the equator $S^{d-1}\times\{0\}\subset S^{d}$, recovering on this equator the principal fibration $\pi(u,v)=u\bar v$ of the unit loop $U=\Shell_{1}(\Order)$ over itself. We prove that this principal fibration is well-defined both in the associative and in the alternative Moufang case, the latter using Artin's two-generated associativity (Theorem~\ref{thm:hopf-moufang}). We deliberately distinguish this finite principal fibration from the  topological Hopf fibration $S^{2d-1}\to S^{d}$: the two have different codomains and different geometric content, and conflating them would be a mistake.

 The plane construction $\Order^{2}$ with $Q(x,y)=\NN(x)+\NN(y)$ sits at the crossroads of four otherwise separate strands: the classical lattices and orders (Conway and Sloane~\cite{ConwaySloane}; Conway and Smith~\cite{ConwaySmith}; Wilson~\cite{Wilson}); the non-crystallographic side (Moody and Patera~\cite{MoodyPatera}; Champagne, Kjiri, Moody and Patera~\cite{ChampagneKjiriMoodyPatera}); the literature on composition algebras and their projective planes (Springer--Veldkamp~\cite{SpringerVeldkamp}; Baez~\cite{Baez}; McCrimmon~\cite{McCrimmon}); and the recent axiomatic framework of non-crystallographic integral root-shell systems over $\Z[\golden]$ developed in~\cite{Corradetti2605}. The closest neighbour on the discrete-Hopf side is Castro Perelman~\cite{CastroPerelman}, who fibres the $E_{8}$ lattice over a $D_{4}$-cross polytope with $D_{4}$ fibres. What we contribute is a uniform plane construction that recovers the orthogonal-direct-sum root systems on both sides of the crystallographic--golden boundary; an unconditional rank-obstruction theorem closing the rank-eight golden octonion question; and the precise relationship between the algebraic Hopf map and its restriction to the balanced shell, valid in the alternative Moufang case via Artin's theorem.

Before any technical work, one might want to have a look at Table~\ref{tab:main} which summarises the classification proved in \S\S\ref{sec:crystal}--\ref{sec:hopf}. We introduced it here since one will benefit keeping it in sight: every later section corresponds to a row or a vertical slice of the table. We refer to it whenever a numerical claim is made in the text. The new feature, with respect to the standard catalogues of root-polytopes, is the separation of the column \emph{Root system} (an algebraic datum) from the column \emph{Classical polytope} (a geometric datum); the column \emph{Doubling root system} records the root system of the axis shell, and the column \emph{Doubling polytope} the classical name of its convex hull.

\begin{table}[t]
\centering
\caption{\textit{Synoptic table of the integral plane construction over the ten crystallographic and the two non-crystallographic orders studied in this article. The columns are: composition algebra; integral order $\Order$; cardinality of the unit shell $\lvert\Shell_{1}(\Order)\rvert$; root system of $\Shell_{1}(\Order)$; classical polytope realising the convex hull of $\Shell_{1}(\Order)$; root system of the axis shell $\Axis(\Order)$ ($=R\oplus R$); classical polytope of the axis-shell convex hull; cardinality of the balanced shell $\lvert\Balanced(\Order)\rvert$; algebraic type of the principal fibration. Here $\beta_{n}$ is the $n$-cross polytope (orthoplex); $\{m\}$ is the regular $m$-gon; $4_{21}$ is the Gosset polytope of $E_{8}$.}}
\label{tab:main}
\renewcommand{\arraystretch}{1.1}
\setlength{\tabcolsep}{3pt}
\footnotesize
\begin{tabular}{llccccccl}
\toprule
$A$ & Order $\Order$ & $\lvert\Shell_{1}\rvert$ & Root sys. & Polytope & Axis $R{\oplus}R$ & Axis polytope & $\lvert\Balanced\rvert$ & Fibration \\
\midrule
$\R$ & $\Z$ & $2$ & $A_{1}$ & $\beta_{1}$ (segment) & $2A_{1}$ & $\beta_{2}$ (square) & $4$ & real \\
$\C$ & $\Z[i]$ Gaussian & $4$ & $2A_{1}$ & $\beta_{2}$ (square) & $4A_{1}$ & $\beta_{4}$ (16-cell) & $16$ & complex \\
$\C$ & $\Z[\omega]$ Eisenstein & $6$ & $A_{2}$ & $\{6\}$ hexagon & $A_{2}\oplus A_{2}$ & $\{6\}\oplus\{6\}$ tegum & $36$ & complex \\
$\HH$ & Hamilton & $8$ & $4A_{1}$ & $\beta_{4}$ (16-cell) & $8A_{1}$ & $\beta_{8}$ (8-cross) & $64$ & quaternionic \\
$\HH$ & Hurwitz & $24$ & $D_{4}$ & 24-cell $\{3,4,3\}$ & $D_{4}\oplus D_{4}$ & 24-cell tegum & $576$ & quaternionic \\
$\OO$ & Cayley--Graves & $16$ & $8A_{1}$ & $\beta_{8}$ (8-cross) & $16A_{1}$ & $\beta_{16}$ (16-cross) & $256$ & alternative \\
$\OO$ & Coxeter--Dickson & $240$ & $E_{8}$ & $4_{21}$ Gosset & $E_{8}\oplus E_{8}$ & $4_{21}$ tegum & $57600$ & Moufang \\
\midrule
$\C$ & $\Z[\zeta_{10}]$, $H_{2}$ cyclotomic & $10$ & $H_{2}$ & $\{10\}$ decagon & $H_{2}\oplus H_{2}$ & $\{10\}\oplus\{10\}$ tegum & $100$ & complex/golden \\
$\HH$ & Icosian $\IcosianRing$ & $120$ & $H_{4}$ & 600-cell $\{3,3,5\}$ & $H_{4}\oplus H_{4}$ & 600-cell tegum & $14400$ & quatern./golden \\
\bottomrule
\end{tabular}
\end{table}

The paper is set as follows. In \S\ref{sec:algebras} we review composition algebras, integral orders, root-shell systems, and the conventions used in the remainder of the paper, closing with a block listing explicit bases for the twelve orders and a Kronecker-style finiteness lemma for the unit shells of $\Z[\golden]$-orders. In \S\ref{sec:planes} we define the integral plane $\Order^{2}$ and prove Theorem~\ref{thm:plane-quadratic-module} (the integral-plane formalism), Lemma~\ref{lem:axis-inclusion} (axis inclusion), Lemma~\ref{lem:no-splitting} (no-splitting criterion), Proposition~\ref{prop:doubling} (root-shell doubling) and Proposition~\ref{prop:free-sum-hull} (convex hull as free sum / tegum). In \S\ref{sec:crystal} we settle the crystallographic classification (Theorem~\ref{thm:crystal-class}). In \S\ref{sec:golden} we prove the non-crystallographic statements $H_{2}\oplus H_{2}$ and $H_{4}\oplus H_{4}$ (Theorems~\ref{thm:H2-shell}--\ref{thm:H4-icosian-shell}). In \S\ref{sec:rank-obstruction} we prove the rank-obstruction theorem and the corollary closing the rank-eight golden octonion question. In \S\ref{sec:hopf} we prove the finite principal fibration statements (Theorems~\ref{thm:hopf-assoc}--\ref{thm:hopf-moufang}) and discuss the relationship with the algebraic Hopf map. \S\ref{sec:reproducibility} describes the public reproducibility supplement. \S\ref{sec:conclusions} contains the conclusions and future developments.

\section{Composition algebras and integral orders}
\label{sec:algebras}

Throughout the paper, $A$ is a real composition algebra which means a real algebra $A$ equipped with a non-degenerate quadratic form $\NN\colon A\to\R$ which is \emph{multiplicative}, i.e.\ $\NN(xy)=\NN(x)\NN(y)$ for all $x,y\in A$. The polar form is
\[
\BB(x,y)=\NN(x+y)-\NN(x)-\NN(y)=x\bar y+y\bar x,
\]
where $\bar{x}=\BB(x,1)\cdot 1-x$ is the canonical involution. By Hurwitz's theorem (see Schafer~\cite{Schafer}, ch.~III, or Springer--Veldkamp~\cite{SpringerVeldkamp}, ch.~1) the unital real composition algebras are, up to isomorphism, $\R$, $\C$, $\HH$ and $\OO$.  

In case of the real, i.e., $A=\R$, one has trivially $\NN(x)=x^{2}$ and $\bar x=x$ while for the complex case $A=\C$, $\NN(a+bi)=a^{2}+b^{2}$. In the quaternionic case $A=\HH$, $\NN(a+bi+cj+dk)=a^{2}+b^{2}+c^{2}+d^{2}$ while for the octonions $A=\OO$, $\NN$ is the standard Cayley-Graves form. In all cases the unit shell $\{x\in A:\NN(x)=1\}=S^{d-1}$ is closed under multiplication, conjugation and inversion; it is a Lie group for $d\le 4$ and the Moufang loop $S^{7}$ for $d=8$.

\subsection{Integral orders and ranks}

Given an algebra $A$, one might want to introduce the notion of an integer order over it. Let $R\in\{\Z,\Z[\golden]\}$. By an \emph{$R$-order} in $A$ we mean a finitely generated $R$-module $\Order\subset A$ satisfying $1\in\Order$, $\bar\Order=\Order$, $\Order\cdot\Order\subseteq\Order$, $\NN(\Order)\subseteq R$, and the bilinear form $\BB$ restricting to $R$-bilinear $\Order\times\Order\to R$.
For an $R$-order $\Order$ of $R$-rank $r$, the underlying $\Z$-rank is $r$ if $R=\Z$ and $2r$ if $R=\Z[\golden]$. When the choice is not obvious from the context we say "$\Z[\golden]$-rank $r$" or "$\Z$-rank $r$" explicitly. By default \emph{rank} means $\Z$-rank.

The orders we consider are listed below.

\begin{example}\label{ex:orders}
The list of orders studied in this paper is:
\begin{enumerate}[label=(\roman*),leftmargin=2.4em]
\item $\R$: the order $\Z\subset\R$, $\Z$-rank one.
\item $\C$: the Gaussian order $\Z[i]$, the Eisenstein order $\Z[\omega]$ with $\omega=e^{2\pi i/3}$, and the cyclotomic order $\Z[\zeta_{10}]$ with $\zeta_{10}=e^{2\pi i/10}$.
\item $\HH$: the Hamilton order $\Z\langle 1,i,j,k\rangle$, the Hurwitz order, and the icosian ring $\IcosianRing\subset\HH$ of $\Z[\golden]$-rank four ($\Z$-rank eight).
\item $\OO$: the Cayley--Graves order $\Z\langle 1,e_{1},\dots,e_{7}\rangle$, and the Coxeter--Dickson maximal order whose minimal vectors realise the $E_{8}$ root system.
\end{enumerate}
Hybrid intermediate orders (the so-called $2A_{2}$ in $\HH$, the $8A_{1}$ in $\OO$, and the $2D_{4}$ in $\OO$) are constructed inside the same $\HH$ and $\OO$ as direct sums of two or four classical orders and appear in Table~\ref{tab:main} for completeness.
\end{example}

\subsection{Norm shells and root-shell systems}
For $m\in R$, we set
\[
\Shell_{m}(\Order)=\{x\in\Order:\NN(x)=m\}.
\]
For $R=\Z$ the shell $\Shell_{m}$ is finite because $\Order\subset\R^{d}$ is a discrete lattice. For $R=\Z[\golden]$ the underlying $\Z$-module of $\Order$ is finitely generated, but the embedding into $\R^{d}$ need not be discrete: the cyclotomic order $\Z[\zeta_{10}]$ is dense in $\C$. The finiteness of $\Shell_{1}$ is therefore not automatic. The following Lemma settles the question for all $\Z[\golden]$-orders considered here, and it will play a role in \S\ref{sec:rank-obstruction}.

\begin{lemma}\label{lem:kronecker-finite}
Let $\Order$ be a $\Z[\golden]$-order in a real composition algebra $A$. The unit shell $\Shell_{1}(\Order)=\{x\in\Order:\NN(x)=1\text{ in }\Z[\golden]\}$ is finite.
\end{lemma}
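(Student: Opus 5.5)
The plan is to use the Galois conjugation $\sigma\colon\golden\mapsto 1-\golden=-\golden^{-1}$ of $\Q(\sqrt5)$ and embed $\Order$ as a discrete lattice in a doubled space. Since $\Order$ is a finitely generated $\Z[\golden]$-module inside the torsion-free $\Z[\golden]$-module $A$, and $\Z[\golden]$ is a PID, $\Order$ is free. Choose a $\Z[\golden]$-basis $v_{1},\dots,v_{r}$. Every $x\in\Order$ is then uniquely $x=\sum_{i}c_{i}v_{i}$ with $c_{i}\in\Z[\golden]$.

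Define the conjugate vector $x^{\sigma}=\sum_{i}\sigma(c_{i})e_{i}\in\R^{r}$, with $e_{i}$ the standard basis. The map $x\mapsto (c_{1},\dots,c_{r},\sigma(c_{1}),\dots,\sigma(c_{r}))\in\R^{2r}$ is the Minkowski embedding of $\Z[\golden]^{r}$. It has discrete image, because $\Z[\golden]\to\R^{2}$, $c\mapsto(c,\sigma(c))$, is a full lattice.

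Next, express the norm condition in coordinates. Write $G_{ij}=\tfrac12\BB(v_{i},v_{j})\in\tfrac12\Z[\golden]$. Then for $x\in\Order$,
\[
\NN(x)=\sum_{i,j}G_{ij}c_{i}c_{j}=1 .
\]
Applying $\sigma$ gives
\[
\sum_{i,j}\sigma(G_{ij})\sigma(c_{i})\sigma(c_{j})=1 .
\]
The first equation, together with positive definiteness of $\NN$ on $A$ (the real division algebras have Euclidean norm), bounds $\lvert x\rvert$. Since the $v_{i}$ are linearly independent over $\Z[\golden]$, the plan is to bound the coefficients $c_{i}$ themselves using the positive definite real matrix $G$.

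\emph{Main obstacle.} The real-linear independence of the $v_{i}$ is not automatic: the example $\Z[\zeta_{10}]\subset\C$ has $\Z[\golden]$-rank two in a real-two-dimensional space, so there linear independence does hold. In general, however, $\Z[\golden]$-independence does not give $\R$-independence, and then the coefficients cannot be bounded from the first equation alone. I would handle this with a two-step argument.

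First, I would show that the $\Z[\golden]$-span of $\Order$ has $\Z[\golden]$-rank at most $d=\dim A$ by a trace-form argument. The form $\operatorname{Tr}_{\Q(\sqrt5)/\Q}\circ\BB$ is a $\Z$-valued symmetric form on the $\Z$-module $\Order$. One then needs both $\BB$ and $\sigma\circ\BB$ to be positive semidefinite to make it a positive form on a lattice. Positivity of the conjugate form $\sigma(\BB)$ is the genuinely delicate point. I would try to derive it from the fact that $\Order$ is a ring with multiplicative norm: for $x\in\Order$, $\NN(x^{n})=\NN(x)^{n}$ stays in $\Z[\golden]$. I would then argue via the characteristic polynomial $t^{2}-\BB(x,1)t+\NN(x)$ and integrality of the powers $x^{n}$ in a finitely generated module that $\sigma(\NN(x))\ge 0$. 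The reasoning would be Kronecker-style: the eigenvalues of multiplication by $x$ on the finitely generated $\Z$-module $\Order$ are algebraic integers, and the conjugates of $\NN(x)$ appear among products of such eigenvalue pairs, which are complex-conjugate pairs, hence nonnegative.

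Once both $\NN$ and $\sigma\circ\NN$ are known to be positive semidefinite on $\Order$, the conclusion follows. The set $\Shell_{1}(\Order)$ maps into the lattice $\Order\subset A\times A^{\sigma}$ defined by the real embedding $\iota(x)$ and its conjugate. Under this map $\Shell_{1}(\Order)$ lands in a bounded region, since $\NN(x)=1$ and $\sigma(\NN(x))=1$. A discrete set meets a bounded region in finitely many points, so $\Shell_{1}(\Order)$ is finite. This is where the name ``Kronecker-style'' enters: it is the same finiteness mechanism as Kronecker's theorem that an algebraic integer all of whose conjugates are bounded belongs to a finite set.
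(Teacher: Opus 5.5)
Your proof breaks at the step you yourself single out: the claim that $\sigma(\NN(x))\ge 0$ for $x\in\Order$. The eigenvalue argument does not give it. If you extend $\sigma$ to an embedding of the span $\Q(\golden)\Order$, then $\bar x$ is sent to the relative Galois conjugate of the image of $x$, not to its complex conjugate. Those two images can be real numbers of opposite sign. Integrality of the powers $x^{n}$ controls algebraic integrality, not signs.

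In fact the claim is false under the paper's definition of a $\Z[\golden]$-order, and so is the lemma. Let $\tau=i\sqrt{\golden}\in\C$ and $\Order=\Z[\golden]\oplus\Z[\golden]\tau$. Since $\tau^{2}=-\golden$ and $\bar\tau=-\tau$, $\Order$ is closed under multiplication and conjugation. Moreover $\NN(a+b\tau)=a^{2}+\golden b^{2}$ and $\BB(a+b\tau,c+d\tau)=2ac+2\golden bd$ lie in $\Z[\golden]$, so every axiom holds. But $\sigma(\NN(\tau))=1-\golden<0$. The conjugate form $\sigma(a)^{2}+(1-\golden)\sigma(b)^{2}$ is therefore indefinite, and $\sigma(\NN(x))=1$ cuts out a hyperbola, not a compact set. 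Concretely, take $x=(\golden-1)(1+\tau)$. Then $\NN(x)=(\golden-1)^{2}(1+\golden)=\bigl((\golden-1)\golden\bigr)^{2}=1$. Also $x+\bar x=2\golden-2$ has Galois conjugate $-2\golden\notin[-2,2]$, so $x$ is not a root of unity. Hence the powers $x^{n}$ are pairwise distinct elements of $\Shell_{1}(\Order)$, dense in the unit circle, and the unit shell is infinite.

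So no argument can close your plan without an extra hypothesis. The paper's own proof has the same unjustified step: it asserts that $\iota(\Shell_{1}(\Order))$ lies in the product of the unit spheres of $A_{1}$ and $A_{2}$, which silently assumes the conjugate norm on $A_{2}$ is positive definite. What is true is the lemma for totally definite orders, meaning $\sigma\circ\NN$ is positive definite on $\Q(\golden)\Order$. This holds for the two orders the paper actually uses:
\begin{itemize}
\item for $\Z[\zeta_{10}]$ the conjugate norm is $z\mapsto\lvert\theta(z)\rvert^{2}$, where $\theta$ is the field automorphism with $\theta(\zeta_{10})=\zeta_{10}^{3}$;
\item for $\IcosianRing$ it is again a sum of four squares in the coordinates $1,i,j,k$.
\end{itemize}
Under that hypothesis your Minkowski-embedding argument is complete as written.

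Your worry about real linear independence is unnecessary, and so is the trace-form detour. The Gram matrix $G$ has entries in $\Q(\golden)$, so a nonzero real kernel vector would give a nonzero $\Q(\golden)$-rational one. That would be a nonzero element of $\Q(\golden)\Order$ of norm $0$, contradicting definiteness of $\NN$ on $A$. Thus any $\Z[\golden]$-basis of $\Order$ is $\R$-independent and $G$ is positive definite.
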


\begin{proof}
For $\NN(x)\in\Z[\golden]$ to equal $1\in\Z[\golden]$ means both Galois conjugates of $\NN(x)$ equal $1\in\R$. Let $\sigma_{1},\sigma_{2}$ be the two real embeddings of $\Q(\golden)$, with $\sigma_{1}(\golden)=\golden$ and $\sigma_{2}(\golden)=1-\golden$. They induce two real composition algebras $A_{i}=A\otimes_{\sigma_{i}}\R$ ($i=1,2$), each of real dimension $d$. The diagonal embedding $\iota\colon\Order\hookrightarrow A_{1}\times A_{2}\cong\R^{2d}$, defined by $\iota(x)=(\sigma_{1}(x),\sigma_{2}(x))$ where $\sigma_{i}$ is applied coefficient-wise to elements of $\Order=\Z[\golden]\otimes_{\Z}(\Order/\Z[\golden])\hookrightarrow A_{\varphi}$, is well-defined and $\Z$-linear.

The image $\iota(\Order)$ is a finitely generated $\Z$-module of rank equal to the $\Z$-rank of $\Order$, namely $2\,(\Z[\golden]\text{-rank of }\Order)$. We claim it is \emph{discrete} in $\R^{2d}$, regardless of whether it has full rank in the target. Indeed, a finitely generated $\Z$-submodule $M\subseteq\R^{n}$ is discrete if and only if its $\R$-span has $\R$-dimension equal to the $\Z$-rank of $M$ (Bourbaki, \emph{Algèbre commutative}, ch.~VII or any standard text). The $\R$-span of $\iota(\Order)$ contains $\iota(1)=(1,1)$ and the images of an $\Z[\golden]$-basis of $\Order$ together with their $\golden$-multiples; the two embeddings $\sigma_{1},\sigma_{2}$ are linearly independent characters on $\Z[\golden]$, so the $\R$-span of $\iota(\Order)$ has $\R$-dimension equal to the $\Z$-rank of $\Order$. Hence $\iota(\Order)$ is discrete in $\R^{2d}$.

The unit shell embeds, under $\iota$, into the compact product $S^{d-1}_{1}\times S^{d-1}_{2}$ of the two unit spheres of $A_{1},A_{2}$. The intersection of a discrete subset with a compact set is finite.
\end{proof}

\begin{definition}\label{def:root-shell-system}
A finite subset $T$ of a Euclidean space $V$ is a \emph{root-shell system over $R$} if (R1) $T=-T$; (R2) $T$ is contained in a single sphere $\{x\in V:\NN(x)=1\}$ for some Euclidean norm $\NN$; (R3) for every $\alpha\in T$, the reflection $r_{\alpha}(x)=x-\frac{2\langle x,\alpha\rangle}{\langle\alpha,\alpha\rangle}\alpha$ preserves $T$; (R4) the Cartan coefficients $\frac{2\langle\alpha,\beta\rangle}{\langle\alpha,\alpha\rangle}\in R$ for all $\alpha,\beta\in T$. For $R=\Z$ this is a (crystallographic) root system in the sense of Bourbaki~\cite{Bourbaki}, but with all roots required to have the same length; for $R=\Z[\golden]$ this is a \emph{non-crystallographic root-shell system} in the sense of Humphreys~\cite{Humphreys} and~\cite{Corradetti2605}.
\end{definition}

\begin{remark} \label{rem:single-length}
Axiom (R2) of Definition~\ref{def:root-shell-system} imposes that all roots have the same length. This is not an extra constraint in our setting but a feature of the way the unit shell is defined: $\Shell_{1}(\Order)$ is by definition the level set $\NN=1$ of the composition norm, so every element of $\Shell_{1}(\Order)$ has Euclidean norm $1$ in the ambient real composition algebra, and the resulting subset lies on a single sphere automatically. Hence (R2) holds for free in every line of Table~\ref{tab:main}. The single-length condition does however have a consequence: in the rank obstruction theorem of \S\ref{sec:rank-obstruction} it excludes the crystallographic groups $B_{n}=C_{n}$, $F_{4}$, $G_{2}$ (which have two root lengths) and the two-length golden quasi-systems (which are not root-shell systems in the sense of Definition~\ref{def:root-shell-system}). In other words, the obstruction is exactly as strong as the article's level-$1$ formulation; weakening the single-length requirement opens the question to other configurations, which we do not pursue.
\end{remark}

\begin{remark}\label{rem:shell-identifications}
The unit shells of the orders in Example~\ref{ex:orders} are root-shell systems in the sense of Definition~\ref{def:root-shell-system}: for $\Z$, $\Z[i]$, $\Z[\omega]$, Hamilton, and Hurwitz this is well known; for the Cayley--Graves order it is the 8-cross polytope; for the Coxeter--Dickson order it is the $E_{8}$ minimal-vector shell of 240 vertices; for $\Z[\zeta_{10}]$ it is the regular decagon $H_{2}$ (see also~\cite{Corradetti2605}, Proposition~6.2); and for the icosian ring it is the $600$-cell $H_{4}$ (\cite{Corradetti2605}, Proposition~6.4).
\end{remark}

\subsection{Notation: explicit bases for the twelve orders and the relevant polytopes}

In order to avoid confusion between notations, we close the section with a single reference notation for all the bases and polytope names.

\textit{Orders:}\\
$\Z$ has $\Z$-basis $\{1\}$;\\
$\Z[i]$ has $\Z$-basis $\{1,i\}$;\\
$\Z[\omega]$ has $\Z$-basis $\{1,\omega\}$, $\omega=e^{2\pi i/3}$;\\
$\Z[\zeta_{10}]$ has $\Z$-basis $\{1,\zeta,\zeta^{2},\zeta^{3}\}$, $\zeta=\zeta_{10}=e^{2\pi i/10}$;\\
Hamilton has $\Z$-basis $\{1,i,j,k\}$;\\
Hurwitz has $\Z$-basis $\{1,i,j,(1+i+j+k)/2\}$;\\
the icosian ring $\IcosianRing$ has $\Z[\golden]$-basis
$\bigl\{1,\;i,\;(1+i+j+k)/2,\;(-1+(\golden-1)i-\golden j)/2\bigr\}$ (Conway--Smith~\cite{ConwaySmith}, ch.~8);\\
the Cayley--Graves order has $\Z$-basis $\{1,e_{1},\dots,e_{7}\}$ with the standard Fano triples $(1,2,3),(1,4,5),(1,7,6),(2,4,6),(2,5,7),(3,4,7),(3,6,5)$;\\
the Coxeter--Dickson order is the $E_{8}$ lattice with the integral-octonion multiplication of Wilson~\cite{Wilson}.\\

\textit{Polytopes (as in Coxeter~\cite{Coxeter}):}\\
$\beta_{n}$ denotes the $n$-cross polytope (orthoplex) with $2n$ vertices $\{\pm e_{i}\}_{i=1}^{n}$; $\beta_{4}$ is the 16-cell $\{3,3,4\}$. $\{m\}$ denotes the regular $m$-gon. The 24-cell $\{3,4,3\}$ is the unique self-dual regular 4-polytope. The 600-cell $\{3,3,5\}$ has 120 vertices in $\R^{4}$. The Gosset polytope $4_{21}$ has 240 vertices in $\R^{8}$ and is the convex hull of the $E_{8}$ root system. For two polytopes $P_{1}\subset V_{1},P_{2}\subset V_{2}$ in orthogonal subspaces, $P_{1}\oplus P_{2}$ denotes the \emph{free sum} (synonym: \emph{tegum}; see Grünbaum~\cite{Grunbaum} and Ziegler~\cite{Ziegler}); $P_{1}\times P_{2}$ denotes the \emph{Cartesian product}; for two polygons $\{p\}\times\{q\}$ is the \emph{$p$-$q$ duoprism}; $\{4\}\times\{4\}=$ tesseract.

\section{Integral planes and their shells}
\label{sec:planes}

Having fixed the algebraic prerequisites, we now turn to the central construction of the article. Given an integral order $\Order$, we form the integral plane $\Order^{2}$ with quadratic form $Q(x,y)=\NN(x)+\NN(y)$ and analyse its unit shell. Within $\Shell_{1}(\Order^{2})$ we isolate two natural sub-shells, the \emph{axis shell} and the \emph{balanced shell}, and prove the no-splitting criterion that forces the full unit shell to coincide with the axis shell. This is the technical engine of the doubling phenomenon that organises Sections~\ref{sec:crystal}--\ref{sec:hopf}.

\begin{definition}\label{def:integral-plane}
For an $R$-order $\Order$ in a real composition algebra $A$, the \emph{integral plane} of $\Order$ is the $R$-module $\Order^{2}=\Order\oplus\Order$ equipped with the quadratic form
$Q(x,y)=\NN(x)+\NN(y)\in R$ and the polar form $\BB_{Q}((x,y),(x',y'))=\BB(x,x')+\BB(y,y')$.
\end{definition}

Following the previous definition one has

\begin{theorem}[Plane-shell formalism]\label{thm:plane-quadratic-module}
Let $\Order$ be an $R$-order in $A$, $R\in\{\Z,\Z[\golden]\}$. Then $(\Order^{2},Q)$ is an integral quadratic $R$-module. Its Gram matrix, in the concatenated $R$-basis of $\Order\oplus\Order$, is $\mathrm{diag}(G_{\Order},G_{\Order})$.
\end{theorem}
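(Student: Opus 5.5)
The plan is to verify the defining properties of an integral quadratic $R$-module directly from the order axioms, and then read off the Gram matrix from the orthogonal splitting. First, $\Order^{2}=\Order\oplus\Order$ is a finitely generated $R$-module, being a direct sum of two finitely generated $R$-modules. If $\{e_{1},\dots,e_{r}\}$ is an $R$-basis of $\Order$, then the concatenated family $\{(e_{i},0)\}\cup\{(0,e_{j})\}$ is an $R$-basis of $\Order^{2}$ of rank $2r$. Here I will take $\Order$ to be free; this is automatic for $R=\Z$ and also for $R=\Z[\golden]$, since that ring is a PID, so a finitely generated torsion-free module over it is free.

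Next comes integrality. For $(x,y)\in\Order^{2}$ we have $Q(x,y)=\NN(x)+\NN(y)\in R$, because $\NN(\Order)\subseteq R$ and $R$ is closed under addition. The polar form of $Q$ is
\[
Q((x,y)+(x',y'))-Q(x,y)-Q(x',y')=\bigl[\NN(x+x')-\NN(x)-\NN(x')\bigr]+\bigl[\NN(y+y')-\NN(y)-\NN(y')\bigr],
\]
which equals $\BB(x,x')+\BB(y,y')=\BB_{Q}$. So $\BB_{Q}$ is indeed the polar form of $Q$. It is $R$-bilinear and $R$-valued because $\BB$ restricts to an $R$-bilinear map $\Order\times\Order\to R$ by the definition of an $R$-order. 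Homogeneity $Q(\lambda v)=\lambda^{2}Q(v)$ for $\lambda\in R$ follows from that of $\NN$ on the $R$-module $\Order\subset A$. Here I would note that the $\Z[\golden]$-action on $\Order$ is the one induced from scalars in $A$, so $\NN(\lambda x)=\lambda^{2}\NN(x)$.

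For the Gram matrix, write $G_{\Order}=(\BB(e_{i},e_{j}))_{i,j}$. Then
\[
\BB_{Q}((e_{i},0),(e_{j},0))=\BB(e_{i},e_{j}),\qquad
\BB_{Q}((0,e_{i}),(0,e_{j}))=\BB(e_{i},e_{j}),
\]
while the mixed entries are $\BB_{Q}((e_{i},0),(0,e_{j}))=\BB(e_{i},0)+\BB(0,e_{j})=0$. The matrix is therefore block diagonal, $\mathrm{diag}(G_{\Order},G_{\Order})$.

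The only delicate point, and the expected obstacle, is the $R=\Z[\golden]$ case. There one must make sure that "integral quadratic $R$-module" is read with $R$-valued forms, so that equality $Q=1$ means equality in $\Z[\golden]$, as in Lemma~\ref{lem:kronecker-finite}. One must also make sure the $R$-module structure on $\Order$ is compatible with $\NN$ being $R$-quadratic. I would handle this by stating explicitly that $\Order$ carries its $R$-structure via the embedding $R\subset\R\subset A$. With that convention, $\NN$ and $\BB$ are $R$-quadratic and $R$-bilinear by restriction of the real ones, and the argument above goes through verbatim.
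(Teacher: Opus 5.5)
Your proposal is correct and follows the paper's own direct verification. The paper gets integrality of $Q$ from $\NN(\Order)\subseteq R$, notes that the polar form $\BB_{Q}((x,y),(x',y'))=\BB(x,x')+\BB(y,y')$ is $R$-bilinear, and obtains the block-diagonal Gram matrix from $\BB_{Q}((x,0),(0,y))=0$. Your extra remarks make explicit two points the paper leaves implicit: $\Order$ is free over the PID $\Z[\golden]$, so a concatenated $R$-basis exists, and $R$ acts through $R\subset\R$, so $\NN$ is $R$-quadratic.
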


\begin{proof}
We proceed with a direct verification: $Q$ is homogeneous of degree 2, its polar form $\BB_{Q}((x,y),(x',y'))=\BB(x,x')+\BB(y,y')$ is $R$-bilinear, and integrality follows from $\NN(\Order)\subseteq R$. The Gram matrix is block-diagonal because the two factors $\Order\times\{0\}$ and $\{0\}\times\Order$ are orthogonal: $\BB_{Q}((x,0),(0,y))=\BB(x,0)+\BB(0,y)=0$.
\end{proof}

\subsection{Axis and balanced shells}
\label{ssec:axis-balanced}

\begin{definition}\label{def:axis-shell}
The \emph{axis shell} of $\Order$ is $\Axis(\Order)=(\Shell_{1}(\Order)\times\{0\})\cup(\{0\}\times\Shell_{1}(\Order))\subset\Shell_{1}(\Order^{2})$.
\end{definition}

\begin{definition}\label{def:balanced-shell}
The \emph{balanced shell} of $\Order$ is $\Balanced(\Order)=\tfrac{1}{\sqrt 2}\,\Shell_{1}(\Order)\times\Shell_{1}(\Order)\subset S^{2d-1}\subset A^{2}\cong\R^{2d}$.
\end{definition}

The axis shell lives at $Q=1$; the balanced shell sits at the un-normalised level $Q(u,v)=\NN(u)+\NN(v)=2$, normalised by the scaling factor $1/\sqrt 2$ so that its image lies on the unit sphere.

\begin{lemma}[Axis inclusion]\label{lem:axis-inclusion}
$\Axis(\Order)\subseteq\Shell_{1}(\Order^{2})$.
\end{lemma}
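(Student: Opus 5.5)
The plan is a direct two-step verification: first check that each element of $\Axis(\Order)$ belongs to the module $\Order^{2}$, and then check that it has $Q$-value $1$ in $R$. By Definition~\ref{def:axis-shell}, $\Axis(\Order)$ is the union of $\Shell_{1}(\Order)\times\{0\}$ and $\{0\}\times\Shell_{1}(\Order)$, so it is enough to treat a generic element of each piece. The unit shell of the plane is read, as in the definition of $\Shell_{m}$, as $\Shell_{1}(\Order^{2})=\{(x,y)\in\Order^{2}:Q(x,y)=1\text{ in }R\}$.

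Membership in $\Order^{2}$ comes first. Take $x\in\Shell_{1}(\Order)\subseteq\Order$. Because $\Order$ is an $R$-module, it contains $0$, so $(x,0)$ and $(0,x)$ both lie in $\Order\oplus\Order=\Order^{2}$ (Definition~\ref{def:integral-plane}).

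For the norm, the key fact is that $\NN(0)=0$, which holds because $\NN$ is a quadratic form. Then
\[
Q(x,0)=\NN(x)+\NN(0)=1+0=1,
\]
where $\NN(x)=1$ holds as an equality in $R$. In the case $R=\Z[\golden]$, this equality in $R$ means that both Galois conjugates of $\NN(x)$ equal $1$. The sum $\NN(x)+\NN(0)$ is computed in $R$, so the conclusion $Q(x,0)=1$ also holds in $R$, and in particular at both real embeddings. The element $(0,x)$ is handled by the same computation with the coordinates swapped.

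No step is a genuine obstacle. The only point that needs care is confirming that the level condition defining $\Shell_{1}(\Order^{2})$ is stated in $R$, and not merely as a real-valued Euclidean equality in the golden case. This is immediate because $Q$ takes values in $R$ by Theorem~\ref{thm:plane-quadratic-module}.
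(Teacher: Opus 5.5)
Your proof is correct and is essentially the paper's argument: the one-line computation $Q(u,0)=\NN(u)+\NN(0)=1+0=1$ for $u\in\Shell_{1}(\Order)$, with the symmetric case $(0,u)$ handled the same way. The paper leaves implicit the two checks you make explicit, namely that $(u,0)\in\Order^{2}$ because $0\in\Order$, and that the level condition holds in $R$ and not only in $\R$.
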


\begin{proof}
$Q(u,0)=\NN(u)+\NN(0)=1+0=1$ for $u\in\Shell_{1}(\Order)$; symmetrically for $(0,u)$.
\end{proof}

\subsection{The no-splitting criterion}

\begin{lemma}[No-splitting criterion]\label{lem:no-splitting}
Let $\Order$ be an $R$-order in $A$ such that the only solutions of $1=a+b$ with $a,b\in\NN(\Order)$ totally positive or zero in $R$ are $(a,b)\in\{(1,0),(0,1)\}$. Then $\Shell_{1}(\Order^{2})=\Axis(\Order)$.
\end{lemma}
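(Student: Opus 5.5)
The plan is to prove the two inclusions separately. One direction is already Lemma~\ref{lem:axis-inclusion}, which gives $\Axis(\Order)\subseteq\Shell_{1}(\Order^{2})$. So everything reduces to the reverse inclusion $\Shell_{1}(\Order^{2})\subseteq\Axis(\Order)$.

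Take $(x,y)\in\Order^{2}$ with $Q(x,y)=1$, and set $a=\NN(x)$ and $b=\NN(y)$. By the definition of an $R$-order, $a,b\in\NN(\Order)\subseteq R$, and by Definition~\ref{def:integral-plane} we have $a+b=1$ in $R$.

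The next step is to check that $a$ and $b$ are totally positive or zero, so that the hypothesis applies.
\begin{itemize}
\item For $R=\Z$ this is immediate: $\NN$ is positive definite on $A$, so $a,b\ge 0$.
\item For $R=\Z[\golden]$ one must control both real embeddings. I would reuse the set-up of the proof of Lemma~\ref{lem:kronecker-finite}. For each embedding $\sigma_{i}$, the conjugate $\sigma_{i}(\NN(x))$ is the norm of $\sigma_{i}(x)$ in the real composition algebra $A_{i}=A\otimes_{\sigma_{i}}\R$.
\item Since $A_{i}$ is one of the Hurwitz division algebras $\R,\C,\HH,\OO$, its norm is positive definite. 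Hence $\sigma_{i}(a)\ge 0$, with equality iff $\sigma_{i}(x)=0$, and likewise for $b$.
\end{itemize}
This Galois-conjugate positivity is the step I expect to be the main obstacle. It depends on the conjugate algebra $A_{2}$ still carrying a definite norm. That holds for the cyclotomic and icosian orders, but it should be stated explicitly as an assumption on the $\Z[\golden]$-orders considered, as is implicit in Lemma~\ref{lem:kronecker-finite}. If needed, I would state it as a standing convention rather than derive it in general.

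With positivity in hand, the hypothesis gives $(a,b)\in\{(1,0),(0,1)\}$. Suppose $(a,b)=(1,0)$. Then $\NN(y)=0$, and since $\NN$ is anisotropic (definite) on $A$, this forces $y=0$. Also $\NN(x)=1$ means $x\in\Shell_{1}(\Order)$. So $(x,y)\in\Shell_{1}(\Order)\times\{0\}$. The case $(a,b)=(0,1)$ is symmetric and gives $(x,y)\in\{0\}\times\Shell_{1}(\Order)$. Either way $(x,y)\in\Axis(\Order)$, which proves the reverse inclusion and hence the equality.
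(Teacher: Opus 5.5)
Your argument is the paper's argument: reduce $Q(x,y)=1$ to $\NN(x)+\NN(y)=1$, invoke the hypothesis, and use definiteness of $\NN$ on $A$ to force the zero coordinate to vanish. The only difference is how the positivity step is handled. The paper's proof simply assumes that both summands are totally positive or zero. You isolate this as the real issue for $R=\Z[\golden]$, and you are right to. It does not follow from the stated hypotheses, so your proof is complete only with the extra standing assumption you propose. For $R=\Z$ it is complete as written.

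The step that fails is your bullet asserting that the conjugate algebra is one of $\R,\C,\HH,\OO$. That algebra should be built from the $\Q(\sqrt5)$-span of $\Order$, since $A\otimes_{\sigma_{i}}\R$ is not meaningful for a real algebra $A$. It is a real composition algebra, but nothing forces it to be a division algebra; it can be split.

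Here is a concrete counterexample. Take $\Order=\Z[\golden]\oplus\Z[\golden]\theta\subset\C$ with $\theta=i\sqrt{\golden}$. This satisfies every axiom of a $\Z[\golden]$-order in the paper's sense. Put $x=\golden^{-1}\theta$ and $y=\golden^{-1}$. Then $\NN(x)+\NN(y)=\golden^{-1}+\golden^{-2}=1$ with $x,y\neq 0$. Since $\sigma_{2}(\NN(x))=\sigma_{2}(\golden^{-1})=-\golden<0$, this solution is not totally positive, so the hypothesis does not rule it out. That hypothesis holds for every $\Z[\golden]$-order by Lemma~\ref{lem:no-splitting-trivial}, and yet the conclusion fails here.

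So total nonnegativity of $\NN(\Order)$ is a genuinely necessary extra hypothesis, not a convenience. The alternative is to read the hypothesis as excluding all nontrivial solutions in $\NN(\Order)$; then no positivity argument is needed, but that stronger hypothesis is not what Lemma~\ref{lem:no-splitting-trivial} delivers.

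Your assumption does hold in the two cases the paper actually uses:
\begin{itemize}
\item $\Q(\zeta_{10})$ is a CM field, so every embedding $\tau$ sends $z\bar z$ to $|\tau(z)|^{2}\ge 0$.
\item $\IcosianRing$ lies in the totally definite quaternion algebra $(-1,-1)$ over $\Q(\sqrt5)$.
\end{itemize}
Hence Theorems~\ref{thm:H2-shell} and~\ref{thm:H4-icosian-shell} survive.

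Your remark about Lemma~\ref{lem:kronecker-finite} is also on target. In the same example, $\golden^{-1}(1+\theta)\in\Order$ has norm $\golden^{-2}+\golden^{-1}=1$ and is not a root of unity. All its powers therefore lie in $\Shell_{1}(\Order)$, so that unit shell is infinite, and the finiteness lemma tacitly needs the same assumption.
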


\begin{proof}
If $\NN(x)+\NN(y)=1$ with both summands totally positive or zero, the only possibility is $(\NN(x),\NN(y))\in\{(1,0),(0,1)\}$. By Hurwitz's theorem the real composition algebras have positive-definite norms, so $\NN(z)=0$ forces $z=0$.
\end{proof}

\begin{lemma}\label{lem:no-splitting-trivial}
In $\Z$, the only decomposition $1=a+b$ with $a,b\in\Z_{\ge 0}$ is the trivial one. In $\Z[\golden]$, the only decomposition $1=a+b$ with $a,b$ totally positive or zero is $(a,b)\in\{(1,0),(0,1)\}$.
\end{lemma}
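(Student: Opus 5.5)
The integer case is immediate. If $a,b\in\Z_{\ge 0}$ and $a+b=1$, then $0\le a\le 1$, so $a\in\{0,1\}$ and $b=1-a$ is determined. The plan is to treat the case of $\Z[\golden]$ the same way, replacing the single ordering on $\Z$ by the two real embeddings $\sigma_{1},\sigma_{2}$ already used in Lemma~\ref{lem:kronecker-finite}. Throughout, "totally positive or zero" means $\sigma_{1}(c)\ge 0$ and $\sigma_{2}(c)\ge 0$.

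Write $a=m+n\golden$ with $m,n\in\Z$, so $b=(1-m)-n\golden$. Apply both embeddings to $a+b=1$. Since each $\sigma_{i}$ fixes $1$ and both values $\sigma_{i}(a),\sigma_{i}(b)$ are nonnegative, we get
\[
0\le\sigma_{i}(a)\le 1\qquad(i=1,2).
\]
The key observation is that $a$ is then an algebraic integer whose two conjugates both lie in $[0,1]$.

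To finish, consider the norm and the trace. The norm $\mathrm{N}(a)=\sigma_{1}(a)\sigma_{2}(a)$ is an integer in $[0,1]$, so it equals $0$ or $1$.

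If $\mathrm{N}(a)=0$, then some $\sigma_{i}(a)=0$, and since $\sigma_{i}$ is injective, $a=0$. Hence $(a,b)=(0,1)$.

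If $\mathrm{N}(a)=1$, then both conjugates lie in $(0,1]$ and their product is $1$, which forces $\sigma_{1}(a)=\sigma_{2}(a)=1$. Injectivity again gives $a=1$, hence $b=0$.

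A trace argument works as a cross-check. The trace $\sigma_{1}(a)+\sigma_{2}(a)=2m+n$ is an integer in $[0,2]$. Combined with $\mathrm{N}(a)=m^{2}+mn-n^{2}\in\{0,1\}$, a finite check of the few integer pairs $(m,n)$ leaves only $a\in\{0,1\}$.

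No step here is a real obstacle. The only point needing care is the convention: "totally positive or zero" must mean nonnegative under both embeddings, not merely positivity in the standard embedding. With only $\sigma_{1}$ the statement would fail; for example, $a=\golden-1>0$ and $b=2-\golden>0$ sum to $1$. It is precisely $\sigma_{2}(\golden-1)=-\golden^{-1}-1\cdot 0<0$, more simply $\sigma_{2}(\golden-1)=-\golden^{-1}<0$, that rules such decompositions out. I will state this explicitly, since it is what lets Lemma~\ref{lem:no-splitting} apply to norms in $\Z[\golden]$-orders. Those norms are totally nonnegative because each conjugate algebra $A_{i}$ in Lemma~\ref{lem:kronecker-finite} carries a positive-definite norm.
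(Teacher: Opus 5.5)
Your proof is correct, but it closes the argument with a different invariant than the paper. The paper uses the trace: if $a$ and $1-a$ are both totally positive, the integer $\sigma_1(a)+\sigma_2(a)=2a_0+a_1$ lies in $(0,2)$, so it equals $1$. Substituting $a_1=1-2a_0$ then confines $a_0$ to an explicit real interval containing no integer. (The correct interval is $\bigl((\golden-1)/\sqrt5,\,\golden/\sqrt5\bigr)\approx(0.276,0.724)$ rather than the printed $(0.309,0.724)$; the slip is harmless.) You use the norm instead: $\sigma_1(a)\sigma_2(a)=m^2+mn-n^2$ is an integer in $[0,1]$. You finish with injectivity of $\sigma_i$ and the fact that two numbers in $(0,1]$ with product $1$ must both equal $1$.

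Your route needs no coordinates or numerics, and it handles the ``or zero'' cases uniformly through non-strict inequalities. It also generalises verbatim: an algebraic integer in a totally real field whose conjugates all lie in $[0,1]$ is $0$ or $1$, so the lemma holds over any order in a totally real number field. The paper's computation, by contrast, is tied to the two coordinates of $\Z[\golden]$.

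Two small corrections to your side remarks. First, $\sigma_2(\golden-1)=-\golden^{-1}-1=-\golden$, not $-\golden^{-1}$; only the sign matters, and you have it right. Second, your closing claim that norms from any $\Z[\golden]$-order are totally nonnegative is not needed for this lemma, and it fails for some orders in the paper's sense. In $\Z[\golden][i\sqrt{\golden}]\subset\C$, for example, $\NN(i\sqrt{\golden})=\golden$ has negative conjugate $1-\golden$. The claim does hold for $\Z[\zeta_{10}]$ and the icosian ring.
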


\begin{proof}
For $\Z[\golden]$: write $a=a_{0}+a_{1}\golden$, $b=1-a$. Total positivity of $a$ and $1-a$ in both Galois embeddings gives $0<2a_{0}+a_{1}<2$, so $a_{1}=1-2a_{0}$; substituting back forces $a_{0}\in(0.309,0.724)$, an interval containing no integer. Hence $a\in\{0,1\}$.
\end{proof}

\subsection{Root-shell doubling, free sum, automorphism group}

\begin{proposition}[Root-shell doubling]\label{prop:doubling}
If $\Shell_{1}(\Order)$ is a root-shell system over $R$, then $\Axis(\Order)$ is a root-shell system over $R$ of type $\Shell_{1}(\Order)\oplus\Shell_{1}(\Order)$, the orthogonal direct sum.
\end{proposition}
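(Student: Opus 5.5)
We verify the four axioms of Definition~\ref{def:root-shell-system} for $T=\Axis(\Order)=T_{1}\cup T_{2}$, where $T_{1}=\Shell_{1}(\Order)\times\{0\}$ and $T_{2}=\{0\}\times\Shell_{1}(\Order)$. We work in the Euclidean space $V=A\oplus A$ with inner product $\langle (x,y),(x',y')\rangle=\tfrac12\BB_{Q}((x,y),(x',y'))$. By Theorem~\ref{thm:plane-quadratic-module} this is the orthogonal sum of two copies of $(A,\tfrac12\BB)$. The basic fact we use throughout is that $A\times\{0\}$ and $\{0\}\times A$ are orthogonal, since the Gram matrix is $\mathrm{diag}(G_{\Order},G_{\Order})$.

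\emph{Axioms (R1) and (R2).} For (R1), note that $-(u,0)=(-u,0)$ and $\Shell_{1}(\Order)=-\Shell_{1}(\Order)$, the latter by (R1) for $\Shell_{1}(\Order)$. For (R2), Lemma~\ref{lem:axis-inclusion} gives $Q=1$ on $T$, so $T$ lies on the unit sphere of $Q$. Finiteness of $T$ follows from finiteness of $\Shell_{1}(\Order)$, which holds because $\Shell_{1}(\Order)$ is assumed to be a root-shell system; in the golden case this is also Lemma~\ref{lem:kronecker-finite}.

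\emph{Axioms (R3) and (R4).} These follow from a case split on whether the two roots lie in the same summand.
\begin{itemize}
\item Take $\alpha=(a,0)\in T_{1}$. Then $r_{\alpha}(x,y)=(r_{a}(x),y)$, because $\langle (x,y),(a,0)\rangle=\langle x,a\rangle$. Hence $r_{\alpha}$ acts as $r_{a}$ on $T_{1}$, which it preserves by (R3) for $\Shell_{1}(\Order)$. It fixes $T_{2}$ pointwise, since $T_{2}$ is orthogonal to $\alpha$. The case $\alpha\in T_{2}$ is symmetric.
\item For (R4), the Cartan coefficients within a single summand coincide with those of $\Shell_{1}(\Order)$, so they lie in $R$. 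Across the two summands they equal $0\in R$.
\end{itemize}
This also shows that $T$ decomposes as the orthogonal direct sum $\Shell_{1}(\Order)\oplus\Shell_{1}(\Order)$, with Weyl group $W\times W$.

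The only point requiring care is the normalization of the inner product: the reflection formula must use the same bilinear form on $V$ as on $A$. Using $\tfrac12\BB$ on both sides makes this consistent. The Cartan ratios $2\langle\alpha,\beta\rangle/\langle\alpha,\alpha\rangle$ are invariant under rescaling the form anyway, so this choice affects no conclusion. Nothing deeper is needed.
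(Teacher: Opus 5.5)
Your proof is correct and takes the same route as the paper: block-diagonality of $\BB_{Q}$ makes the two copies of $\Shell_{1}(\Order)$ mutually orthogonal, so Cartan coefficients within a summand are those of $\Shell_{1}(\Order)$ and those across summands vanish. You also verify (R1)--(R3) explicitly, notably $r_{(a,0)}(x,y)=(r_{a}(x),y)$, which the paper's two-line proof leaves implicit.
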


\begin{proof}
The polar form $\BB_{Q}$ is block-diagonal (Theorem~\ref{thm:plane-quadratic-module}), so the two factors are mutually orthogonal. Cartan coefficients within each factor are those of $\Shell_{1}(\Order)$, hence in $R$; across factors they vanish.
\end{proof}

\begin{proposition}[Convex hull of the axis shell is a free sum/tegum]\label{prop:free-sum-hull}
Let $P=\conv(\Shell_{1}(\Order))$. Then $\conv(\Axis(\Order))=P\oplus_{\mathrm{free}}P$, the free sum (= tegum, Coxeter) of $P$ with itself.
\end{proposition}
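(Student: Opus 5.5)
The plan is to prove the equality of two convex sets directly, working in $A^{2}=A\oplus A\cong\R^{2d}$ with the Euclidean structure $\BB_{Q}$. By Theorem~\ref{thm:plane-quadratic-module} this is the orthogonal direct sum of $V_{1}=A\times\{0\}$ and $V_{2}=\{0\}\times A$. First fix the definition of the free sum used in the paper (Grünbaum, Ziegler): for polytopes $P_{1}\subset V_{1}$, $P_{2}\subset V_{2}$ in orthogonal subspaces, both containing the origin in their relative interior,
\[
P_{1}\oplus P_{2}=\conv\bigl((P_{1}\times\{0\})\cup(\{0\}\times P_{2})\bigr).
\]
The key preliminary is that $0$ lies in the interior of $P=\conv(\Shell_{1}(\Order))$ relative to its affine span. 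This follows from (R1): $\Shell_{1}(\Order)=-\Shell_{1}(\Order)$, so $0=\tfrac12(u+(-u))\in P$, and $P$ is centrally symmetric. Hence $0$ is the centroid of $P$ and so lies in its relative interior. Finiteness of $\Shell_{1}(\Order)$, by discreteness for $R=\Z$ and by Lemma~\ref{lem:kronecker-finite} for $R=\Z[\golden]$, guarantees that $P$ is a polytope.

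The main step is the identity $\conv(X\cup Y)=\conv(\conv X\cup\conv Y)$ for finite sets $X,Y$, applied with $X=\Shell_{1}(\Order)\times\{0\}$ and $Y=\{0\}\times\Shell_{1}(\Order)$. Since $\conv(X)=P\times\{0\}$ and $\conv(Y)=\{0\}\times P$, we get
\[
\conv(\Axis(\Order))=\conv\bigl((P\times\{0\})\cup(\{0\}\times P)\bigr)=P\oplus P
\]
directly from the definition of $\Axis(\Order)$. For the inclusion ``$\subseteq$'', observe that $X\subseteq P\times\{0\}$ and $Y\subseteq\{0\}\times P$. For ``$\supseteq$'', a convex combination of points of $\conv X$ and $\conv Y$ is itself a convex combination of points of $X\cup Y$.

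If one prefers the explicit description $P\oplus P=\{(\lambda p,(1-\lambda)q):p,q\in P,\ \lambda\in[0,1]\}$, check it separately. Containment in the hull is immediate. For closure under convex combinations, write
\[
t(\lambda p,(1-\lambda)q)+(1-t)(\mu p',(1-\mu)q')
\]
and regroup the first coordinate as $(t\lambda+(1-t)\mu)\cdot p''$, where $p''$ is a convex combination of $p,p'$; treat the second coordinate the same way. The degenerate case of a zero weight uses $0\in P$.

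The main obstacle is only definitional. The tegum must be taken with respect to a common centre at the origin, in orthogonal subspaces. The point $0\in\operatorname{relint}P$ established above ensures this is the genuine free sum: the facets of $P\oplus P$ are the joins $F_{1}*F_{2}$ of facets, so the combinatorics match the classical tegum. Finally, verify that $\dim(P\oplus P)=2\dim P$, since the two copies span orthogonal subspaces.
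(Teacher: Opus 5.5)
Your proposal is correct and follows the paper's route. The paper simply invokes the standard free-sum property (Gr\"unbaum, Ziegler); you verify that property directly from the definition $P\oplus P=\conv\bigl((P\times\{0\})\cup(\{0\}\times P)\bigr)$ via $\conv(X\cup Y)=\conv(\conv X\cup\conv Y)$, and your use of $\Shell_{1}(\Order)=-\Shell_{1}(\Order)$ to put $0$ in the relative interior of $P$ supplies the hypothesis the free-sum definition actually needs, whereas the paper's remark about convex position on a single sphere is what identifies the vertex set of $P$ (and hence of the tegum) with the shell itself.
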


\begin{proof}
Standard property of the free sum (Grünbaum~\cite{Grunbaum} \S15.1, Ziegler~\cite{Ziegler} Lemma~9.8); $\Shell_{1}(\Order)$ is in convex position on a single sphere.
\end{proof}

\begin{proposition}[Automorphism group of the axis shell]\label{prop:axis-automorphisms}
If $\Shell_{1}(\Order)$ is a root-shell system whose convex hull $P$ is not centrally degenerate, then $\Aut(\Axis(\Order))\cong(\Aut(P)\times\Aut(P))\rtimes C_{2}$, with the cyclic factor swapping the two orthogonal copies.
\end{proposition}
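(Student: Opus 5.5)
The plan is to realise $\Aut(\Axis(\Order))$ as the stabiliser of a decomposition $V=V_{1}\oplus V_{2}$, where $V_{1}=A\times\{0\}$ and $V_{2}=\{0\}\times A$. The group will then be the wreath product $\Aut(P)\wr C_{2}$.

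Throughout, $\Aut$ means the group of isometries of the ambient Euclidean space preserving the finite set. For $P$ this is the same as the symmetry group of the polytope, because the vertex set of $P$ is exactly $\Shell_{1}(\Order)$: the shell lies in convex position on a single sphere, as used in Proposition~\ref{prop:free-sum-hull}. Since $\Shell_{1}(\Order)=-\Shell_{1}(\Order)$ by (R1), its centroid is $0$. Every symmetry fixes the centroid, so every symmetry is linear and orthogonal, and the same holds for $\Axis(\Order)$.

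I will read "not centrally degenerate" as saying that $\Shell_{1}(\Order)$ spans $A$ and is \emph{indecomposable}. Concretely, the graph $\Gamma$ on $\Shell_{1}(\Order)$ with an edge $\alpha\sim\beta$ whenever $\BB(\alpha,\beta)\neq0$ is connected. Some hypothesis of this kind is genuinely needed. For the Gaussian order, $\Axis=4A_{1}$ has symmetry group the hyperoctahedral group of order $384$, whereas $(\Aut(\beta_{2})^{2})\rtimes C_{2}$ has order $128$.

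\textbf{Easy inclusion.} First I check that $(\Aut(P)\times\Aut(P))\rtimes C_{2}$ embeds in $\Aut(\Axis(\Order))$. A pair $(g_{1},g_{2})$ acts by $(x,y)\mapsto(g_{1}x,g_{2}y)$. This is orthogonal for $\BB_{Q}$, since the form is block-diagonal by Theorem~\ref{thm:plane-quadratic-module}, and it visibly preserves each half of the axis shell. The swap $\sigma(x,y)=(y,x)$ is also orthogonal and exchanges the two halves. The relation $\sigma(g_{1},g_{2})\sigma^{-1}=(g_{2},g_{1})$ gives the semidirect structure. The normal subgroup meets $\langle\sigma\rangle$ trivially, because $\sigma$ does not preserve $V_{1}$.

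\textbf{Reverse inclusion.} Let $g$ be an orthogonal map preserving $\Axis(\Order)$. Because $g$ preserves inner products, it is an automorphism of the non-orthogonality graph $\widetilde\Gamma$ on $\Axis(\Order)$. It therefore permutes the connected components of $\widetilde\Gamma$. By block-diagonality, vertices in different halves are never adjacent. By the indecomposability hypothesis, each half $\Shell_{1}(\Order)\times\{0\}$ and $\{0\}\times\Shell_{1}(\Order)$ is connected. So $\widetilde\Gamma$ has exactly these two components, and $g$ either preserves both halves or swaps them.

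Because each half spans its factor, $g$ either preserves both $V_{1}$ and $V_{2}$ or swaps them. In the swapping case, replace $g$ by $\sigma g$. Now $g$ preserves both factors, and its restrictions are orthogonal maps $g_{i}$ of $A$ preserving $\Shell_{1}(\Order)$, so each $g_{i}$ lies in $\Aut(P)$. Hence $g=(g_{1},g_{2})$, and the two inclusions give the isomorphism.

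\textbf{Main obstacle.} The expected difficulty is not computational; it is stating the hypothesis so that the component argument goes through. I will make explicit that "not centrally degenerate" means spanning and indecomposable. With that reading the statement covers Eisenstein, Hurwitz, Coxeter--Dickson, $H_{2}$ and $H_{4}$. It excludes the $kA_{1}$ rows ($\Z[i]$, Hamilton, Cayley--Graves), where $\Axis$ is again a cross-polytope with a larger hyperoctahedral symmetry group. The row $\Z$ is excluded too: $\Shell_{1}(\Z)=\{\pm1\}$ is indecomposable and spans $\R$, yet $\Axis(\Z)=2A_{1}$ is the square, whose symmetry group has order $8$, equal to $|(C_{2}\times C_{2})\rtimes C_{2}|$. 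So I will also require $\dim A\ge2$, with the $\Z$ case noted separately.
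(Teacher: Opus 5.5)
Your proposal is correct and follows the paper's argument: an orthogonal automorphism of $\Axis(\Order)$ permutes the two orthogonal irreducible components, giving $\Aut(P)\times\Aut(P)$ extended by the swap $C_{2}$. What you add is a precise formulation, via connectivity of the non-orthogonality graph, of the spanning-and-irreducibility hypothesis that the paper's one-line proof tacitly needs; your $\Z[i]$ count ($384\neq 128$) shows that this hypothesis is genuinely required. The only slip is the extra restriction $\dim A\ge 2$: $\{\pm1\}$ is indecomposable and spans $\R$, so your argument already covers $\Z$, where the conclusion does hold, since $C_{2}\wr C_{2}$ is the order-$8$ symmetry group of the square.
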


\begin{proof}
Any orthogonal automorphism stabilising $\Axis(\Order)$ permutes the two orthogonal-irreducible components. The components are isomorphic, so the permutation group is $C_{2}$.
\end{proof}

\section{Crystallographic plane shells}
\label{sec:crystal}

We now apply the integral-plane machinery to the seven classical crystallographic orders, ie., the integers, the Gaussian and Eisenstein orders, the Hamilton and Hurwitz quaternionic orders, the Cayley--Graves and Coxeter--Dickson octonionic orders, together with their three hybrid intermediates. The no-splitting criterion of \S\ref{sec:planes} reduces the unit shell of $\Order^{2}$ to the axis shell, and the doubling proposition identifies, line by line, the resulting root system with the orthogonal direct sum recorded in Table~\ref{tab:main}.

\begin{theorem}[Crystallographic plane-shell classification]\label{thm:crystal-class}
For each crystallographic order $\Order\in\{\Z,\Z[i],\Z[\omega],\HH_{\mathrm{Ham}},\HH_{\mathrm{Hurw}},\OO_{CG},\OO_{CD}\}$ (and the hybrid intermediates), $\Shell_{1}(\Order^{2})=\Axis(\Order)$, and the root system of $\Axis(\Order)$ is the orthogonal-direct-sum doubling listed in Table~\ref{tab:main}.
\end{theorem}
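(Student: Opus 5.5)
The proof proceeds in two stages: first show that the full unit shell of the plane reduces to the axis shell, then identify the root system and polytope line by line.

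\emph{Stage one: $\Shell_{1}(\Order^{2})=\Axis(\Order)$.} Every order in the list is a $\Z$-order, so $\NN(\Order)\subseteq\Z_{\ge 0}$. This holds because the norm is positive definite by Hurwitz's theorem and integral by the definition of an order. For the Hurwitz and Coxeter--Dickson orders, integrality of $\NN$ on the half-integral generators is the standard fact that these are integral orders; it can be checked directly on the bases listed in \S\ref{sec:algebras}. The hybrid intermediates are built as direct sums of classical orders, so integrality again holds. By the first half of Lemma~\ref{lem:no-splitting-trivial}, the only decompositions $1=a+b$ with $a,b\in\Z_{\ge 0}$ are $(1,0)$ and $(0,1)$. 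The hypothesis of Lemma~\ref{lem:no-splitting} is therefore satisfied, and it gives $\Shell_{1}(\Order^{2})=\Axis(\Order)$. The reverse inclusion is also recorded separately as Lemma~\ref{lem:axis-inclusion}.

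\emph{Stage two: identify the root system of $\Axis(\Order)$ for each line.} By Remark~\ref{rem:shell-identifications}, each $\Shell_{1}(\Order)$ is a root-shell system over $\Z$ of the type given in the ``Root sys.'' column. The types are $A_{1}$, $2A_{1}$, $A_{2}$, $4A_{1}$, $D_{4}$, $8A_{1}$, $E_{8}$, together with the hybrid types. Proposition~\ref{prop:doubling} then shows that $\Axis(\Order)$ is the orthogonal direct sum $R\oplus R$. The cardinalities can be read off along the way: $|\Axis|=2|\Shell_{1}|$ and $|\Balanced|=|\Shell_{1}|^{2}$. Proposition~\ref{prop:free-sum-hull} identifies the polytope column. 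The only nontrivial identifications are that a free sum of cross polytopes is again a cross polytope, $\beta_{m}\oplus\beta_{n}=\beta_{m+n}$, which gives $\beta_{2},\beta_{4},\beta_{8},\beta_{16}$. The other lines remain tegums: hexagon, 24-cell and $4_{21}$.

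\emph{Main obstacle.} The logical content is almost entirely inherited from the earlier lemmas. The step needing care is confirming the unit-shell identifications for the octonionic orders. For $\OO_{CG}$, $\Shell_{1}$ consists of the 16 elements $\pm1,\pm e_{i}$, which form $8A_{1}$; this is a direct count, since an integer vector of norm $1$ has exactly one entry $\pm1$. For $\OO_{CD}$, the 240 norm-one elements form $E_{8}$; this rests on the identification of the order with the $E_{8}$ lattice with its norm halved, as in Wilson. I would cite this rather than reprove it. I would also check the hybrid orders' unit shells by the same counting argument, applied summand-wise in the direct sum.
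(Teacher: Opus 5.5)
Your proposal is correct and follows the paper's proof essentially step for step. The $\Z$-case of Lemma~\ref{lem:no-splitting-trivial} feeds Lemma~\ref{lem:no-splitting} to give $\Shell_{1}(\Order^{2})=\Axis(\Order)$, and the classical identifications of Remark~\ref{rem:shell-identifications} (with $\OO_{CD}$ cited from Conway--Smith and Wilson) then feed Proposition~\ref{prop:doubling}. Your use of Proposition~\ref{prop:free-sum-hull} with $\beta_{m}\oplus\beta_{n}=\beta_{m+n}$ for the polytope column, and your summand-wise count for the hybrids, are harmless additions that the paper leaves implicit.
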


\begin{proof}
By Lemma~\ref{lem:no-splitting-trivial}(i), the no-splitting hypothesis of Lemma~\ref{lem:no-splitting} holds for every $\Z$-order. The root-system identifications $\Shell_{1}\mapsto$ named system, listed in Remark~\ref{rem:shell-identifications}, are classical:
\begin{itemize}[leftmargin=2em]
\item $\Z\Rightarrow A_{1}$ (segment), $\Z[i]\Rightarrow 2A_{1}$ (square), $\Z[\omega]\Rightarrow A_{2}$ (hexagon);
\item Hamilton $\Rightarrow 4A_{1}$ (16-cell), Hurwitz $\Rightarrow D_{4}$ (24-cell);
\item Cayley--Graves $\Rightarrow 8A_{1}$ (8-cross), Coxeter--Dickson $\Rightarrow E_{8}$ (Gosset $4_{21}$, Conway--Smith~\cite{ConwaySmith} ch.~9, Wilson~\cite{Wilson}).
\end{itemize}
Proposition~\ref{prop:doubling} then identifies $\Axis(\Order)$ with the orthogonal-direct-sum doubling. The hybrid orders ($2A_{2}$, $8A_{1}$ extension, $2D_{4}$) double accordingly, with cardinalities $24$, $48$, $96$.
\end{proof}

\begin{table}[h]
\centering
\caption{\textit{Crystallographic plane-shell doubling. For each canonical crystallographic order $\Order$, the unit shell $\Shell_{1}(\Order)$ is recorded together with its identification both as a root system (algebraic) and as a classical polytope (geometric); the right-hand block displays the axis shell $\Axis(\Order)=\Shell_{1}(\Order^{2})$ as the orthogonal direct sum $R\oplus R$, whose convex hull is the free sum (tegum) of the original polytope with itself. Throughout, $\beta_{n}$ denotes the $n$-cross polytope and $\{m\}$ the regular $m$-gon.}}
\label{tab:crystal-doubling}
\renewcommand{\arraystretch}{1.2}
\setlength{\tabcolsep}{5pt}
\footnotesize
\begin{tabular}{llcccccc}
\toprule
 & & \multicolumn{3}{c}{Unit shell $\Shell_{1}(\Order)$} & \multicolumn{3}{c}{Axis shell $\Axis(\Order)=\Shell_{1}(\Order^{2})$} \\
\cmidrule(lr){3-5}\cmidrule(lr){6-8}
$A$ & Order $\Order$ & $\lvert\Shell_{1}\rvert$ & Root system & Polytope & $\lvert\Axis\rvert$ & Root system & Polytope \\
\midrule
$\R$  & $\Z$              & $2$   & $A_{1}$  & $\beta_{1}$ (segment)  & $4$   & $2A_{1}$            & $\beta_{2}$ (square) \\
$\C$  & $\Z[i]$ Gaussian  & $4$   & $2A_{1}$ & $\beta_{2}$ (square)   & $8$   & $4A_{1}$            & $\beta_{4}$ (16-cell) \\
$\C$  & $\Z[\omega]$ Eisenstein & $6$ & $A_{2}$ & $\{6\}$ hexagon    & $12$  & $A_{2}\oplus A_{2}$ & $\{6\}\oplus\{6\}$ tegum \\
$\HH$ & Hamilton          & $8$   & $4A_{1}$ & $\beta_{4}$ (16-cell)  & $16$  & $8A_{1}$            & $\beta_{8}$ (8-cross) \\
$\HH$ & Hurwitz           & $24$  & $D_{4}$  & $\{3,4,3\}$ (24-cell)  & $48$  & $D_{4}\oplus D_{4}$ & 24-cell tegum \\
$\OO$ & Cayley--Graves    & $16$  & $8A_{1}$ & $\beta_{8}$ (8-cross)  & $32$  & $16A_{1}$           & $\beta_{16}$ (16-cross) \\
$\OO$ & Coxeter--Dickson  & $240$ & $E_{8}$  & $4_{21}$ (Gosset)      & $480$ & $E_{8}\oplus E_{8}$ & $4_{21}$ tegum \\
\bottomrule
\end{tabular}
\end{table}

\begin{remark}[Root systems $nA_{1}$ vs cross polytopes]\label{rem:nA1-vs-Cn}
For the Hamilton order, the unit shell $\{\pm 1,\pm i,\pm j,\pm k\}\subset\HH\cong\R^{4}$ is, \emph{as a polytope}, the 4-cross polytope $\beta_{4}$ (= 16-cell). \emph{As a root system}, however, it is $4A_{1}$ — four orthogonal copies of $A_{1}$ — not $C_{4}$. The crystallographic root system $C_{n}$ has $2n^{2}$ roots (a long-root cross polytope plus a short-root demi-cube), \emph{not} $2n$. The shell of $2n$ orthogonal unit vectors $\{\pm e_{i}\}_{i=1}^{n}$ is the root system $nA_{1}$ and the polytope $\beta_{n}$. Similar remarks apply to the Cayley--Graves order ($8A_{1}\ne C_{8}$, $\beta_{8}$).
\end{remark}

\section{Non-crystallographic plane shells}
\label{sec:golden}
We now cross the crystallographic boundary. The cyclotomic order $\Z[\zeta_{10}]\subset\C$ and the icosian ring $\IcosianRing\subset\HH$ are the two $\Z[\golden]$-orders whose unit shells realise the non-crystallographic root systems $H_{2}$ and $H_{4}$. The plane construction, combined with the no-splitting criterion and the doubling proposition, yields the root-shell systems $H_{2}\oplus H_{2}$ and $H_{4}\oplus H_{4}$ over $\Z[\golden]$: the non-crystallographic counterparts of the orthogonal-direct-sum doublings obtained in the previous section.

\subsection{The $H_{2}$ cyclotomic plane shell}

\begin{theorem}[$H_{2}$ plane-shell]\label{thm:H2-shell}
Let $\Order_{H_{2}}=\Z[\zeta_{10}]\subset\C$ with $\zeta_{10}=e^{2\pi i/10}$, $\NN(z)=z\bar z$. Then
$\Shell_{1}(\Order_{H_{2}})=\{\zeta_{10}^{k}:0\le k\le 9\}$,
the regular decagon $H_{2}$ of cardinality $10$. The axis shell
$\Axis(\Order_{H_{2}})=\Shell_{1}(\Order_{H_{2}}^{\,2})$
is the non-crystallographic root-shell system $H_{2}\oplus H_{2}$ over $\Z[\golden]$, of cardinality $20$, whose convex hull is the decagonal tegum $\{10\}\oplus\{10\}$.
\end{theorem}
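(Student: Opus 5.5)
The plan has three parts: compute the unit shell $\Shell_{1}(\Order_{H_{2}})$ directly, show the plane has no non-axis unit vectors, and then apply the doubling results of \S\ref{sec:planes}.

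\textbf{The unit shell.} First I would check that $\Z[\zeta_{10}]$ is a $\Z[\golden]$-order in the sense of \S\ref{sec:algebras}. It contains $\golden=\zeta+\zeta^{-1}$, since $2\cos(\pi/5)=\golden$. It is closed under conjugation, because $\bar\zeta=\zeta^{9}$. Norms and traces lie in $\Z[\golden]=\Z[\zeta]\cap\R$, as they are real elements of $\Z[\zeta]$.

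Next, $\Shell_{1}$ is finite by Lemma~\ref{lem:kronecker-finite}, and it is a group under multiplication. I would identify it by the conjugate embedding. If $\NN(x)=1$ in $\Z[\golden]$, then both Galois conjugates of $\NN(x)$ equal $1$. These are $|\sigma(x)|^{2}$ for all four complex embeddings $\sigma$ of $\Q(\zeta_{10})$ (two pairs of complex conjugates). So every conjugate of $x$ has absolute value $1$, and Kronecker's theorem makes $x$ a root of unity in $\Q(\zeta_{10})$. The roots of unity in that field are exactly $\pm\zeta^{k}$, i.e.\ the ten powers $\zeta^{k}$, since $-1=\zeta^{5}$. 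Conversely, each $\zeta^{k}$ has norm $1$, so $\Shell_{1}$ is the regular decagon.

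The step needing the most care is the passage from ``$\NN(x)=1$ in $\Z[\golden]$'' to ``all conjugates have modulus one''. I would make it explicit via the embeddings $\sigma_{1},\sigma_{2}$ of Lemma~\ref{lem:kronecker-finite} extended to $\Q(\zeta)$, and cite Kronecker for algebraic integers with all conjugates on the unit circle.

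\textbf{No splitting.} By Lemma~\ref{lem:no-splitting-trivial}, applied with $R=\Z[\golden]$, the only totally-nonnegative decompositions of $1$ are the trivial ones. The values $\NN(x)$ are totally nonnegative, because their conjugates are $|\sigma(x)|^{2}\ge 0$. Lemma~\ref{lem:no-splitting} then gives $\Shell_{1}(\Order^{2})=\Axis(\Order)$, which has $20$ elements.

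\textbf{Doubling and convex hull.} The decagon is a root-shell system over $\Z[\golden]$ (Remark~\ref{rem:shell-identifications}). Directly, the Cartan coefficients are $2\cos(k\pi/5)\in\{\pm 2,\pm\golden,\pm(\golden-1)\}\subset\Z[\golden]$. Reflections in the lines of the decagon preserve it, because it is the orbit of the dihedral group $I_{2}(5)$ of order $10$ acting with its mirrors through vertices. Proposition~\ref{prop:doubling} then yields $H_{2}\oplus H_{2}$. Proposition~\ref{prop:free-sum-hull} gives the convex hull $\{10\}\oplus\{10\}$, using that the decagon vertices are in convex position on the circle.
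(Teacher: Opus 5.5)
Your proposal is correct and follows the paper's route: Kronecker's theorem identifies the shell, Lemmas~\ref{lem:no-splitting} and~\ref{lem:no-splitting-trivial} give $\Shell_{1}(\Order_{H_{2}}^{\,2})=\Axis(\Order_{H_{2}})$, and Proposition~\ref{prop:doubling} gives the doubling; your use of $\sigma(\NN(x))=|\sigma(x)|^{2}$ and your explicit total non-negativity of norms are more precise than the paper's appeal to $|N_{K/\Q}(z)|=1$. One small slip: the mirrors of the root reflections $r_{\zeta^{j}}(x)=-\zeta^{2j}\bar x$ pass through edge midpoints, not vertices (the order-$10$ group with mirrors through vertices has the two inscribed pentagons as orbits), but (R3) holds anyway since $r_{\zeta^{j}}(\zeta^{k})=\zeta^{2j-k+5}$.
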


\begin{proof}
By Kronecker's theorem (every algebraic integer all of whose Galois conjugates have absolute value $1$ is a root of unity), and since $|z|=1$ in one embedding forces $|z|=1$ in the conjugate embedding (because $|N_{K/\Q}(z)|=1$ for $z$ a unit), the unit shell consists exactly of the 10 tenth roots of unity. The doubling to $H_{2}\oplus H_{2}$ is Proposition~\ref{prop:doubling}, and equality $\Shell_{1}(\Order_{H_{2}}^{\,2})=\Axis(\Order_{H_{2}})$ follows from Lemmas~\ref{lem:no-splitting} and~\ref{lem:no-splitting-trivial}(ii).
\end{proof}

\begin{example}[Explicit vertex set of $\Axis(\Order_{H_{2}})$]\label{ex:H2-vertices}
In the $\Z$-basis $\{1,\zeta,\zeta^{2},\zeta^{3}\}$ of $\Z[\zeta_{10}]$, with $\zeta=\zeta_{10}$, the ten unit roots in coordinate form are $\zeta^{0}=(1,0,0,0)$, $\zeta^{1}=(0,1,0,0)$, $\zeta^{2}=(0,0,1,0)$, $\zeta^{3}=(0,0,0,1)$, $\zeta^{4}=(-1,1,-1,1)$, and the five negatives. The 20 vertices of $\Axis(\Order_{H_{2}})$ are obtained by placing each of these ten in the first factor with $0$ in the second, or vice versa. The Cartan coefficient between $\zeta^{0}$ and $\zeta^{1}$ in the same factor is $2\langle 1,\zeta\rangle/\langle 1,1\rangle=2\cos(2\pi/10)=\golden\in\Z[\golden]$, illustrating axiom (R4).
\end{example}

\subsection{The icosian $H_{4}$ plane shell}

\begin{theorem}[Icosian plane-shell]\label{thm:H4-icosian-shell}
Let $\IcosianRing\subset\HH$ be the icosian ring. Then $\Shell_{1}(\IcosianRing)$ is finite of cardinality $120$, realising the $600$-cell $H_{4}$. The axis shell $\Axis(\IcosianRing)=\Shell_{1}(\IcosianRing^{\,2})$ is the non-crystallographic root-shell system $H_{4}\oplus H_{4}$ over $\Z[\golden]$, of cardinality $240$, whose convex hull is the $600$-cell tegum.
\end{theorem}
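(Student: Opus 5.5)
The argument has three steps, mirroring Theorem~\ref{thm:H2-shell}: identify the unit shell, exclude splitting in the plane, then double.

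\textbf{Step 1: the unit shell is the 600-cell.} Finiteness is immediate from Lemma~\ref{lem:kronecker-finite}. The content is the count $|\Shell_{1}(\IcosianRing)|=120$ and the identification with the 600-cell. I would argue by two inclusions.

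For the lower bound, I check that the $\Z[\golden]$-basis elements listed in the notation block, and their products, generate the binary icosahedral group $2I$. Its 120 elements are the 24 Hurwitz units $\pm1,\pm i,\pm j,\pm k,\tfrac12(\pm1\pm i\pm j\pm k)$ together with the 96 even permutations of $\tfrac12(\pm\golden,\pm1,\pm\golden^{-1},0)$. All of these have norm $1$ and lie in $\IcosianRing$, since $\IcosianRing$ is closed under multiplication.

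For the upper bound, I use the trace form. For $x\in\IcosianRing$ set $\mathrm{Tr}\,\NN(x)=\NN(x)+\NN(x)'$, where the prime denotes Galois conjugation. By the argument of Lemma~\ref{lem:kronecker-finite}, the map $x\mapsto(\sigma_1 x,\sigma_2 x)$ embeds $\IcosianRing$ as a lattice in $\R^{8}$ on which $\mathrm{Tr}\,\NN$ is a positive definite integral form. This is the Conway--Smith/Wilson description of the icosians as $E_{8}$ with this form (Conway--Smith~\cite{ConwaySmith}, ch.~8). A unit $x$ satisfies $\NN(x)=\NN(x)'=1$, so $\mathrm{Tr}\,\NN(x)=2$, making $x$ a minimal vector of $E_{8}$. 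The $E_{8}$ shell at that level has only 240 vectors, and those with $\NN(x)=1$ exactly, rather than $\NN=\golden^{\pm 2}/\ldots$ splitting, form the 120 elements of $2I$. The other 120 minimal vectors are $\golden$-scaled icosians with $\NN(x)=\golden^{-2}$ and $\NN(x)'=\golden^{2}$, which are excluded because their norm is not $1$.

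Verifying this split cleanly is the main obstacle. I would do it by citing Conway--Smith and Remark~\ref{rem:shell-identifications} (via \cite{Corradetti2605}, Proposition~6.4). If I want to be self-contained, the alternative is to note that $\Shell_{1}$ is a finite subgroup of $S^{3}$ containing $2I$. The only finite subgroup of $S^{3}$ properly containing $2I$ would be infinite, because $2I$ is maximal among finite subgroups. Hence $\Shell_{1}=2I$.

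Given $\Shell_{1}=2I$, its convex hull is the 600-cell $\{3,3,5\}$ and its root system is $H_{4}$. The Cartan coefficients are $2\langle\alpha,\beta\rangle=\BB(\alpha,\beta)\in\{0,\pm1,\pm\golden,\pm\golden^{-1},\pm2\}\subset\Z[\golden]$. The reflections $r_\alpha(x)=-\alpha\bar x\alpha$ preserve $2I$ by group closure. Together these verify (R1)--(R4).

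\textbf{Step 2: no splitting in the plane.} Lemmas~\ref{lem:no-splitting} and~\ref{lem:no-splitting-trivial}(ii) give $\Shell_{1}(\IcosianRing^{2})=\Axis(\IcosianRing)$. The point is that $\NN(\IcosianRing)$ consists of totally nonnegative elements, because both Galois conjugates of $\NN$ are positive definite quaternion norms.

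\textbf{Step 3: doubling.} Proposition~\ref{prop:doubling} identifies $\Axis(\IcosianRing)$ with $H_{4}\oplus H_{4}$, of cardinality $2\cdot120=240$. Proposition~\ref{prop:free-sum-hull} gives the convex hull as the 600-cell tegum.
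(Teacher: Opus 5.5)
Your proof goes through, and its skeleton is the paper's: identify $\Shell_{1}(\IcosianRing)$ with $2I$, invoke Lemmas~\ref{lem:no-splitting} and~\ref{lem:no-splitting-trivial}, then Propositions~\ref{prop:doubling} and~\ref{prop:free-sum-hull}. The real difference is in Step~1. The paper just cites Conway--Smith's explicit list of the 120 unit icosians; you try to prove the count.

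Your group-theoretic version of that proof is correct. The lower bound holds because $2I\subseteq\IcosianRing$. Indeed, $i$ and $(1+i+j+k)/2$ generate $2T$. Adjoining the order-three basis unit, which lies in $2I\setminus 2T$, gives $2I$ because $A_{4}$ is maximal in $A_{5}$. For the upper bound, $\Shell_{1}(\IcosianRing)$ is a subgroup of $S^{3}$, finite by Lemma~\ref{lem:kronecker-finite}, and contains $2I$. Every finite subgroup of $S^{3}$ of order exceeding $120$ is cyclic or binary dihedral, hence solvable, while $2I$ is not; so equality follows. This makes the count independent of the table. Your remark that $\NN(\IcosianRing)$ is totally nonnegative (the quaternion algebra is definite at both real places) also supplies a hypothesis the paper uses silently when it applies Lemma~\ref{lem:no-splitting}.

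The trace-form paragraph, however, is wrong and should be removed or repaired. Take $\mathrm{Tr}\,\NN(x)=\NN(x)+\NN(x)'$ and a nonzero totally nonnegative $a=a_{0}+a_{1}\golden$. Then $a+a'\ge 2$, with equality only for $a=1$: if $a_{1}\neq 0$ then $a+a'\ge|a-a'|=|a_{1}|\sqrt5>2$. So the minimal vectors of this lattice are \emph{exactly} the units. If the lattice were $E_{8}$ there would be $240$ of them, contradicting the very count you want. Correspondingly, your ``other 120 minimal vectors'' $\golden^{-1}u$ sit at level $\golden^{-2}+\golden^{2}=3$, not $2$, so the $120+120$ split is inconsistent with the form you chose.

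The $E_{8}$ structure of Conway--Sloane uses the twisted form $a+b\sqrt5\mapsto a+b$, i.e.\ $\mathrm{Tr}\bigl(\golden\,\NN(x)/\sqrt5\bigr)$. Under it the level-one vectors have $\NN(x)\in\{1,\golden^{-2}\}$, multiplication by $\golden$ swaps the two classes, and $240=120+120$ follows. That repaired argument works, but it needs unimodularity of the icosian lattice as input. The maximality argument is therefore the cleaner self-contained route.
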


\begin{proof}
The 120 unit-norm icosians are explicitly listed in Conway--Smith~\cite{ConwaySmith} ch.~8: 8 elements of type $(\pm 1,0,0,0)$ and permutations; 16 of type $(\pm\tfrac{1}{2})^{4}$; 96 of type $(\pm\tfrac{\golden}{2},\pm\tfrac{1}{2},\pm\tfrac{\golden-1}{2},0)$ in even permutations. They form the binary icosahedral group $2I$, the symmetry group of the 600-cell. Equality $\Shell_{1}(\IcosianRing^{\,2})=\Axis(\IcosianRing)$ follows from Lemmas~\ref{lem:no-splitting} and~\ref{lem:no-splitting-trivial}(ii). Doubling is Proposition~\ref{prop:doubling}.
\end{proof}

\begin{remark}[Comparison with $E_{8}\oplus E_{8}$]\label{rem:E8-vs-H4H4}
The cardinality $240$ of $\Axis(\IcosianRing)$ coincides numerically with the cardinality of the $E_{8}$ root system, but the structures are different. $E_{8}$ is irreducible; $\Axis(\IcosianRing)=H_{4}\oplus H_{4}$ is the orthogonal sum of two irreducibles. The same observation appears, in different language, in~\cite{Corradetti2605}, Proposition~6.6: the icosian double $\IcosianRing\oplus\IcosianRing\ell$ realises the same $H_{4}\oplus H_{4}$ shell inside the octonions via Cayley--Dickson. Our axis-shell realisation lives in the quaternionic plane $\HH^{2}$ as a quadratic module without an algebra structure.
\end{remark}

\begin{table}[h]
\centering
\caption{\textit{Non-crystallographic plane-shell doubling over $\Z[\golden]$: the axis shell $\Axis(\Order)=\Shell_{1}(\Order^{2})$ realises the orthogonal direct sum $R\oplus R$, with convex hull the tegum (free sum) of the unit polytope with itself.}}
\label{tab:noncrystal-doubling}
\renewcommand{\arraystretch}{1.2}
\setlength{\tabcolsep}{5pt}
\footnotesize
\begin{tabular}{llcccccc}
\toprule
 & & \multicolumn{3}{c}{Unit shell $\Shell_{1}(\Order)$} & \multicolumn{3}{c}{Axis shell $\Axis(\Order)=\Shell_{1}(\Order^{2})$} \\
\cmidrule(lr){3-5}\cmidrule(lr){6-8}
$A$ & Order $\Order$ & $\lvert\Shell_{1}\rvert$ & Root system & Polytope & $\lvert\Axis\rvert$ & Root system & Polytope \\
\midrule
$\C$  & $\Z[\zeta_{10}]$ cyclotomic & $10$  & $H_{2}$ & $\{10\}$ decagon       & $20$  & $H_{2}\oplus H_{2}$ & $\{10\}\oplus\{10\}$ tegum \\
$\HH$ & $\IcosianRing$ icosian      & $120$ & $H_{4}$ & $\{3,3,5\}$ (600-cell) & $240$ & $H_{4}\oplus H_{4}$ & 600-cell tegum \\
\bottomrule
\end{tabular}
\end{table}

\section{A rank obstruction for non-crystallographic root-shell systems}
\label{sec:rank-obstruction}

The natural question after Theorem~\ref{thm:H4-icosian-shell} is whether a rank-eight indecomposable golden octonion order exists, in analogy with the Coxeter--Dickson order in the crystallographic case. The answer is negative by a purely Coxeter-theoretic argument.

\begin{theorem}[Rank obstruction]\label{thm:rank-obstruction}
Let $T$ be a finite root-shell system over $\Z[\golden]$ in the sense of Definition~\ref{def:root-shell-system}, indecomposable and non-crystallographic (i.e.\ some Cartan coefficient does not lie in $\Z$). Then
\[
\operatorname{rank}(T)\in\{2,3,4\},
\]
and the underlying reflection group $W=\langle r_{\alpha}:\alpha\in T\rangle$ is one of $H_{2}$, $H_{3}$, or $H_{4}$.
\end{theorem}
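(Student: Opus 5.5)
We reduce to the classification of finite Coxeter groups and then read off which irreducible types are compatible with axioms (R1)--(R4).

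First, $W=\langle r_{\alpha}:\alpha\in T\rangle$ is finite. By (R3) every generator permutes the finite set $T$, and $T$ spans $V_{T}=\operatorname{span}_{\R}T$. Each $r_{\alpha}$ fixes $V_{T}^{\perp}$ pointwise, so $W$ embeds into the symmetric group on $T$. Hence $W$ is a finite real reflection group acting essentially on $V_{T}$, with $\operatorname{rank}(T)=\dim V_{T}$.

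Next, indecomposability of $T$ should force irreducibility of $W$. Suppose $V_{T}=V_{1}\perp V_{2}$ is a nontrivial $W$-stable decomposition. Every root $\alpha$ spans the $-1$-eigenspace of $r_{\alpha}\in W$. Since $r_{\alpha}$ preserves $V_{1}$ and $V_{2}$, this eigenspace lies in $V_{1}$ or in $V_{2}$. So $T=(T\cap V_{1})\sqcup(T\cap V_{2})$ splits orthogonally, contradicting indecomposability. Thus $W$ is an irreducible finite Coxeter group of rank $n=\operatorname{rank}(T)$. By the Coxeter classification (Humphreys~\cite{Humphreys}, ch.~2), $W$ is one of $A_{n},B_{n},D_{n},E_{6},E_{7},E_{8},F_{4},G_{2},H_{3},H_{4},I_{2}(m)$.

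Now use the arithmetic constraint. Take simple roots $\alpha_{i},\alpha_{j}\in T$ with $r_{\alpha_{i}}r_{\alpha_{j}}$ of order $m_{ij}$. Since all roots have equal length by (R2), the Cartan coefficient is $-2\cos(\pi/m_{ij})$, and (R4) requires it to lie in $\Z[\golden]$. The quantity $2\cos(\pi/m)$ is an algebraic integer generating $\Q(\cos(\pi/m))$, of degree $\varphi(2m)/2$ over $\Q$. It lies in $\Q(\sqrt5)$ only for $m\in\{2,3,4,5,6\}$, and lies in $\Z[\golden]$ precisely for $m\in\{2,3,5\}$. The values $\sqrt2$ and $\sqrt3$, arising from $m=4,6$, lie outside $\Q(\sqrt5)$.

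This constraint eliminates several types:
\begin{itemize}[leftmargin=2em]
\item $B_{n}$ (which equals $C_{n}$), $F_{4}$ and $G_{2}$, since each has a bond with $m=4$ or $m=6$; this matches Remark~\ref{rem:single-length};
\item every dihedral $I_{2}(m)$ except $m\in\{2,3,5\}$, where $m=2$ is decomposable.
\end{itemize}
Of the survivors, all Coxeter labels of $A_{n},D_{n},E_{6,7,8}$ equal $3$, so all their Cartan coefficients are integers. The non-crystallographic hypothesis therefore excludes them. What remains is $I_{2}(5)=H_{2}$, $H_{3}$ and $H_{4}$, of ranks $2$, $3$ and $4$, which proves the theorem.

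The main obstacle is the passage from ``Cartan coefficients of pairs of roots in $T$'' to ``Coxeter labels of a simple system.'' One must check that a simple system for $W$ can be chosen inside $T$: the reflections of $W$ are exactly the $r_{\alpha}$ for $\alpha\in T$, up to sign, so $\pm T$ is the full root system of $W$ in the normalised sense. One must also check that (R4) for simple pairs yields the $2\cos(\pi/m)$ constraint. Here I would invoke the standard fact that the reflections of $W$ are precisely the conjugates of the generating reflections. I would then argue that $T$ is $W$-stable and therefore contains the unit normal of every reflecting hyperplane, and take a simple system from a chamber.

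The irrationality step for $2\cos(\pi/m)$ is a routine cyclotomic degree computation. The condition $\varphi(2m)\le4$ gives a short finite list of $m$ to check directly.
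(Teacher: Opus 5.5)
Your proposal is correct and follows essentially the same route as the paper: finiteness of $W$ via its faithful action on $T$, irreducibility from indecomposability, the classification of finite irreducible Coxeter groups, and the cyclotomic degree test $\phi(2m)\le 4$ for $2\cos(\pi/m)\in\Z[\golden]$. The one difference is that you apply (R4) to every Coxeter label, so $B_{n}$, $F_{4}$, $G_{2}$ are excluded because $\sqrt2,\sqrt3\notin\Z[\golden]$. The paper instead discards them up front through the single-length axiom (R2). Your version is the more accurate account, since unit-normalising the roots of any finite reflection group already satisfies (R2), and you also supply justifications the paper only asserts: a simple system inside $T$, and indecomposable $\Rightarrow$ irreducible.
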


\begin{proof}
By (R1) ($T$ finite, central-symmetric) and (R3) (closed under reflections), the group $W$ permutes the finite set $T$ and acts on the real vector space $V=\operatorname{span}_{\R}T$. Any $w\in W$ fixing $T$ pointwise fixes $V$, hence is the identity. The map $W\to\mathrm{Sym}(T)$ is therefore injective and $W$ is finite. Being generated by reflections, $W$ is a finite real reflection group.

Indecomposability of $T$ corresponds to irreducibility of $W$ on $V$.

By (R2), all elements of $T$ have the same length: $T$ is a single-length (equilateral) root system, so $W$ has a single $W$-orbit of root lengths.

The classification of finite irreducible real reflection groups (Humphreys~\cite{Humphreys}, Thm.~2.7 and Table~6.1; Bourbaki~\cite{Bourbaki}, ch.~4--6) lists, among single-length groups:
\begin{itemize}[leftmargin=2em]
\item crystallographic: $A_{n}$ ($n\ge 1$), $D_{n}$ ($n\ge 4$), $E_{6}, E_{7}, E_{8}$;
\item non-crystallographic: $I_{2}(m)$ ($m\ge 5$, $m\ne 6$), $H_{3}$, $H_{4}$.
\end{itemize}
The non-crystallographic assumption excludes the crystallographic row.

For the dihedral system $I_{2}(m)$, the Cartan coefficient between two distinct simple roots is $-2\cos(\pi/m)$. We need $2\cos(\pi/m)\in\Z[\golden]\subset\Q(\sqrt 5)$, a degree-$2$ extension of $\Q$. The minimal polynomial of $2\cos(\pi/m)$ over $\Q$ has degree $\phi(2m)/2$ (where $\phi$ is Euler's totient), so membership in $\Q(\sqrt 5)$ requires $\phi(2m)\le 4$. Computing:
\[
\phi(2m)\le 4\;\Leftrightarrow\; 2m\in\{1,2,3,4,5,6,8,10,12\}\;\Leftrightarrow\;m\in\{1,2,3,4,5,6\}\cup\{4,5,6\}.
\]
Among these, only $m=5$ gives a non-crystallographic $I_{2}(m)$, and indeed $2\cos(\pi/5)=\golden\in\Z[\golden]$. Hence $I_{2}(5)=H_{2}$, of rank $2$, is the only dihedral candidate.

For $H_{3}$ and $H_{4}$, the Cartan coefficients are $\{0,\pm 1,\pm\golden,\pm\golden^{-1}\}\subset\Z[\golden]$ (Humphreys~\cite{Humphreys} \S2.13, Bourbaki~\cite{Bourbaki} ch.~4 Pl.~III). Both satisfy (R4) and give ranks $3$ and $4$.

Therefore the indecomposable non-crystallographic root-shell systems over $\Z[\golden]$ in the sense of Definition~\ref{def:root-shell-system} are exactly $H_{2}$ (rank $2$), $H_{3}$ (rank $3$), $H_{4}$ (rank $4$).
\end{proof}

\begin{corollary}[Unconditional non-existence of a full-rank indecomposable golden octonion order]\label{cor:G3}
There is no $\Z[\golden]$-order $\mathcal{G}_{3}\subset\OO$ whose unit shell $S_{\mathcal{G}_{3}}=\Shell_{1}(\mathcal{G}_{3})$ is an indecomposable non-crystallographic root-shell system over $\Z[\golden]$ in the sense of Definition~\ref{def:root-shell-system} and has full $\R$-rank $8$.
\end{corollary}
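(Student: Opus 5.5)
The plan is to argue by contradiction and reduce everything to Theorem~\ref{thm:rank-obstruction}. Suppose $\mathcal{G}_{3}\subset\OO$ is a $\Z[\golden]$-order whose unit shell $T=\Shell_{1}(\mathcal{G}_{3})$ is an indecomposable non-crystallographic root-shell system over $\Z[\golden]$ with $\operatorname{span}_{\R}T=\OO\cong\R^{8}$.

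The first step is to confirm that $T$ satisfies the hypotheses of Theorem~\ref{thm:rank-obstruction}. Finiteness of $T$ is Lemma~\ref{lem:kronecker-finite}, which applies to any $\Z[\golden]$-order in a real composition algebra, in particular to $\mathcal{G}_{3}\subset\OO$. Axioms (R1)--(R4), indecomposability and non-crystallographicity are exactly the assumptions of the corollary. For (R2), Remark~\ref{rem:single-length} shows that it holds automatically, since $T$ is a level set of $\NN$.

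The second step applies the theorem. It gives $\operatorname{rank}(T)=\dim_{\R}\operatorname{span}_{\R}T\in\{2,3,4\}$, and more precisely the reflection group is $H_{2}$, $H_{3}$ or $H_{4}$. This contradicts the full-rank hypothesis $\operatorname{rank}(T)=8$, so no such $\mathcal{G}_{3}$ exists. As a cross-check, the only candidate that could even come close in size is $H_{4}$, with 120 roots spanning $\R^{4}$. The icosian double $\IcosianRing\oplus\IcosianRing\ell$ of Remark~\ref{rem:E8-vs-H4H4} does reach rank $8$, but it is decomposable ($H_{4}\oplus H_{4}$), so it is excluded precisely by the indecomposability hypothesis and not by rank.

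The one point needing care is matching the notion of rank. In the corollary, ``rank'' means the $\R$-dimension of the span of the shell. This must not be confused with the $\Z[\golden]$-rank of $\mathcal{G}_{3}$, which could be $8$ while its unit shell spans less. I will state explicitly that the rank in Theorem~\ref{thm:rank-obstruction} is $\dim_{\R}\operatorname{span}_{\R}T$, which is what its proof uses, since $W$ acts irreducibly on $V=\operatorname{span}_{\R}T$. I will also check that indecomposability of $T$ corresponds to irreducibility of $W$ on that span. Beyond this bookkeeping, I expect no real obstacle: the corollary is a direct specialisation of the theorem.
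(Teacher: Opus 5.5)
Your proposal is correct and matches the paper's proof: both invoke Theorem~\ref{thm:rank-obstruction} to get $\dim_{\R}\operatorname{span}_{\R}T\le 4$, which contradicts the full-rank-$8$ hypothesis. Your extra bookkeeping is harmless and only makes explicit what the paper leaves implicit: the appeal to Lemma~\ref{lem:kronecker-finite} is redundant, since finiteness is already part of Definition~\ref{def:root-shell-system}, and you separate the $\R$-rank of the shell from the $\Z[\golden]$-rank of the order.
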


\begin{proof}
By Theorem~\ref{thm:rank-obstruction}, every indecomposable non-crystallographic root-shell system $T$ over $\Z[\golden]$ in the sense of Definition~\ref{def:root-shell-system} has $\operatorname{rank}(T)=\dim_{\R}\operatorname{span}_{\R}T\le 4$. Hence the hypothesis of a full $\R$-rank-$8$ shell is incompatible with indecomposability and non-crystallographicity in our axiomatic setting.
\end{proof}

The hypothesis that $S_{\mathcal{G}_{3}}$ have full $\R$-rank $8$ is the natural requirement in the present article: a non-crystallographic order in $\OO$ whose unit shell sits in a proper sub-octonion subalgebra would, by the doubling theorem (Proposition~\ref{prop:doubling}) and the lower-rank classification (Theorems~\ref{thm:H2-shell} and~\ref{thm:H4-icosian-shell}), produce only an axis shell whose root system is $H_{2}\oplus H_{2}$, $H_{3}\oplus H_{3}$ (if such an order existed in $\HH$), or $H_{4}\oplus H_{4}$, already covered by the article. The genuinely new structural possibility, an indecomposable rank-$8$ non-crystallographic root-shell system inside $\OO$, is the case Corollary~\ref{cor:G3} excludes. The case of intermediate $\R$-rank $5$, $6$, or $7$ is also excluded by Theorem~\ref{thm:rank-obstruction}.
It is also worth noting that the obstruction is purely Coxeter-theoretic: it does not use the octonionic structure. The argument applies, mutatis mutandis, to any composition algebra of dimension $\ge 5$, to any non-crystallographic indecomposable root-shell system over $\Z[\golden]$, and to any extension of the coefficient ring by a single quadratic surd.  

\begin{remark}[Alternative definitions that keep the question open]\label{rem:alternative-defs}
If one weakens the root-shell-system axioms, the rank obstruction may no longer hold and a "$\mathcal{G}_{3}$" of a different kind might exist. Concretely:
\begin{enumerate}[label=(\alph*),leftmargin=2em]
\item Drop (R2): allow multiple root lengths; the resulting two-length golden quasi-systems can have higher rank, but are no longer single-length root-shell systems.
\item Weaken (R3): require closure under a subset of reflections rather than all $r_{\alpha}$, $\alpha\in T$; the configuration is no longer the root system of a Coxeter group.
\item Replace (R4): require Cartan coefficients in a larger ring, e.g.\ $\Z[\zeta_{2m}]\cap\R$ for $m\ne 5$; the rank constraint then shifts.
\item Drop the reflection-group structure entirely and require only multiplicative closure (a finite IP Moufang sub-loop): this is the object of \S\ref{sec:hopf} below, and admits the 240-element $\Mouf_{240}(E_{8})$ as a rank-eight example, although the latter is \emph{crystallographic} as a root system.
\end{enumerate}
Each of (a)--(d) is a legitimate alternative pursuit; none is excluded by Theorem~\ref{thm:rank-obstruction}. We pursue none of them in this article.
\end{remark}

\section{Balanced shells and finite principal fibrations}
\label{sec:hopf}

After analysing the axis shell, we turn to the second natural sub-shell of $\Order^{2}$, the \emph{balanced shell} $\Balanced(\Order)$, on which a discrete fibration emerges. A central aim of this section is to clarify a recurrent terminological hazard in the literature: the finite map of interest is not the topological Hopf fibration $S^{2d-1}\to S^{d}$ but its discrete companion, the principal fibration $\pi(u,v)=uv^{-1}$ of the unit loop $U=\Shell_{1}(\Order)$ over itself. We will see how the algebraic Hopf map restricts to the balanced shell, factors through the equator of $S^{d}$, and there coincides with $\pi$, both in the associative and in the alternative Moufang case.

We now turn to the balanced shell $\Balanced(\Order)$ and the discrete structure it carries. We distinguish carefully between two maps:

\begin{itemize}[leftmargin=2em]
\item the \emph{algebraic Hopf map} $\Hopfmap_{A}\colon S^{2d-1}\subset A^{2}\to S^{d}\subset A\oplus\R$ with $\Hopfmap_{A}(a,b)=(2a\bar b,\NN(a)-\NN(b))$, and
\item the \emph{principal fibration} $\pi\colon U\times U\to U$ with $\pi(u,v)=u\bar v=uv^{-1}$,
\end{itemize}
related by the observation that $\Hopfmap_{A}$ restricted to $\Balanced(\Order)$ factors through the equator $S^{d-1}\times\{0\}\subset S^{d}$ and coincides there with $\pi$ up to the normalisation factor.

\subsection{Balanced shell as spherical product}

\begin{theorem}[Balanced shell cardinality and spherical-product structure]\label{thm:balanced-product}
Let $U=\Shell_{1}(\Order)$. Then $\Balanced(\Order)=\tfrac{1}{\sqrt 2}(U\times U)$ has $|U|^{2}$ vertices and lies on the unit sphere of $A^{2}$. Up to the scaling $1/\sqrt 2$, $\conv\bigl(\Balanced(\Order)\bigr)$ is affinely equivalent to the Cartesian product $\conv(U)\times\conv(U)$.
\end{theorem}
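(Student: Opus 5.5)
The proof splits into three independent checks: counting, the norm computation, and the identification of the convex hull.

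\emph{Counting.} The map $(u,v)\mapsto\tfrac{1}{\sqrt 2}(u,v)$ from $U\times U$ to $A^{2}$ is injective, since it is a nonzero scalar multiple of the identity on $A\oplus A$. Hence $|\Balanced(\Order)|=|U|^{2}$. Here $U$ is finite: for $R=\Z$ this holds by discreteness, and for $R=\Z[\golden]$ it holds by Lemma~\ref{lem:kronecker-finite}.

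\emph{Unit sphere.} For $u,v\in U$ we compute
\[
Q\bigl(\tfrac{1}{\sqrt2}u,\tfrac{1}{\sqrt2}v\bigr)=\tfrac12\bigl(\NN(u)+\NN(v)\bigr)=\tfrac12(1+1)=1,
\]
using that $Q$ is homogeneous of degree two (Theorem~\ref{thm:plane-quadratic-module}). Since $\NN$ is the positive-definite Euclidean norm on $A$ by Hurwitz's theorem, $Q$ is the Euclidean norm on $A^{2}\cong\R^{2d}$. So every point of $\Balanced(\Order)$ lies on $S^{2d-1}$.

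\emph{Convex hull.} This step uses the standard identity $\conv(X\times Y)=\conv(X)\times\conv(Y)$ for finite sets $X\subset V_{1}$ and $Y\subset V_{2}$. The inclusion $\subseteq$ holds because the right-hand side is convex and contains $X\times Y$. For $\supseteq$, write $p=\sum_i\lambda_i x_i$ and $q=\sum_j\mu_j y_j$ as convex combinations. Then
\[
(p,q)=\sum_{i,j}\lambda_i\mu_j(x_i,y_j),
\]
and the weights $\lambda_i\mu_j$ are nonnegative and sum to $1$. Applying this with $X=Y=U$ in the orthogonal decomposition $A^{2}=A\oplus A$ gives $\conv(U\times U)=\conv(U)\times\conv(U)$. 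Finally, the linear map $z\mapsto\tfrac{1}{\sqrt2}z$ commutes with taking convex hulls, so $\conv(\Balanced(\Order))=\tfrac{1}{\sqrt2}\bigl(\conv(U)\times\conv(U)\bigr)$. This is affinely (indeed similarity) equivalent to $\conv(U)\times\conv(U)$.

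The statement also says that $\Balanced(\Order)$ has $|U|^{2}$ vertices. To justify this, one more check is needed: every point $\tfrac{1}{\sqrt2}(u,v)$ must be a vertex of the hull, not just a point of the set. Vertices of a product polytope are exactly pairs of vertices of the factors. Each $u\in U$ is a vertex of $\conv(U)$, because $U$ lies on the unit sphere, which is strictly convex, so no point of $U$ is a convex combination of the others (as noted in the proof of Proposition~\ref{prop:free-sum-hull}). This is the only step that needs care, and it is routine. None of the three steps is a real obstacle; the argument is elementary.
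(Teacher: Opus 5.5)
Your proof is correct and follows the paper's route. The paper's one-line proof uses the same two ingredients: the level computation $Q\bigl(\tfrac{1}{\sqrt2}u,\tfrac{1}{\sqrt2}v\bigr)=\tfrac12(\NN(u)+\NN(v))=1$, and the identity $\conv(X\times Y)=\conv(X)\times\conv(Y)$, which the paper cites from Gr\"unbaum and Ziegler rather than proving; your direct proof of that identity and your extreme-point check (strict convexity of the sphere makes every point of $U$ a vertex of $\conv(U)$, so all $|U|^{2}$ points are genuine vertices) fill in details the paper leaves implicit.
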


\begin{proof}
Immediate from $Q(u,v)=\NN(u)+\NN(v)=2$ and the fact that the convex hull of a Cartesian product of vertex sets is the Cartesian product of the convex hulls (Grünbaum~\cite{Grunbaum} \S15.2, Ziegler~\cite{Ziegler} Cor.~9.7).
\end{proof}

\subsection{The algebraic Hopf map and its norm identity}

\begin{lemma}[Norm identity]\label{lem:hopf-norm}
For any composition algebra $A$ and any $a,b\in A$,
\[
\NN_{A\oplus\R}\bigl(\Hopfmap_{A}(a,b)\bigr)=\NN(2a\bar b)+\bigl(\NN(a)-\NN(b)\bigr)^{2}=\bigl(\NN(a)+\NN(b)\bigr)^{2}.
\]
\end{lemma}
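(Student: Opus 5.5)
The identity $\NN(2a\bar b)+(\NN(a)-\NN(b))^{2}=(\NN(a)+\NN(b))^{2}$ reduces to the multiplicativity of $\NN$ together with the elementary identity $(s-t)^{2}+4st=(s+t)^{2}$. I would first fix the meaning of $\NN_{A\oplus\R}$: on the orthogonal direct sum $A\oplus\R$ it is $\NN_{A\oplus\R}(z,t)=\NN(z)+t^{2}$, the natural extension of the composition norm by the standard square on the real summand. With this convention the first equality in the statement is simply the definition applied to $\Hopfmap_{A}(a,b)=(2a\bar b,\NN(a)-\NN(b))$.

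For the second equality I would compute $\NN(2a\bar b)$ in three steps. Since $\NN$ is a quadratic form, $\NN(2a\bar b)=4\NN(a\bar b)$. Multiplicativity, which holds in every composition algebra, including the non-associative case $A=\OO$, then gives $\NN(a\bar b)=\NN(a)\NN(\bar b)$; this product involves only two elements, so no associativity is needed. Finally, $\NN(\bar b)=\NN(b)$, because $\NN(b)=b\bar b=\bar b b$, the canonical involution is an isometry of $\NN$, and $\BB(\bar b,1)=\BB(b,1)$. Hence $\NN(2a\bar b)=4\NN(a)\NN(b)$.

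Writing $s=\NN(a)$ and $t=\NN(b)$, the left-hand side becomes $4st+(s-t)^{2}=s^{2}+2st+t^{2}=(s+t)^{2}$, which is the claim.

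There is no real obstacle. The only points needing care are the convention for $\NN_{A\oplus\R}$ and the justification of $\NN(\bar b)=\NN(b)$, which can be taken from the standard Hurwitz-algebra facts cited in \S\ref{sec:algebras}. As a corollary I would note that for $(a,b)$ with $\NN(a)+\NN(b)=1$ the image lies on the unit sphere $S^{d}$, so $\Hopfmap_{A}$ is well defined as a map $S^{2d-1}\to S^{d}$.
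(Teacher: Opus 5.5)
Your proposal is correct and follows the paper's proof essentially verbatim: homogeneity gives $\NN(2a\bar b)=4\NN(a\bar b)$, multiplicativity turns this into $4\NN(a)\NN(b)$, and the identity $(s-t)^{2}+4st=(s+t)^{2}$ finishes the argument. You are more explicit than the paper in two places: you fix $\NN_{A\oplus\R}(z,t)=\NN(z)+t^{2}$, and you record $\NN(\bar b)=\NN(b)$, which is cleanest by expanding $\NN(\BB(b,1)1-b)$. You are also right that the identity itself needs no appeal to Artin's theorem.
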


\begin{proof}
First of all one has $\NN(2a\bar b)=4\NN(a\bar b)=4\NN(a)\NN(b)$ by multiplicativity, and $(\NN(a)-\NN(b))^{2}=\NN(a)^{2}-2\NN(a)\NN(b)+\NN(b)^{2}$. Summing the two one has
\[
4\NN(a)\NN(b)+\NN(a)^{2}-2\NN(a)\NN(b)+\NN(b)^{2}=\]
\[=\NN(a)^{2}+2\NN(a)\NN(b)+\NN(b)^{2}=(\NN(a)+\NN(b))^{2}.
\]
In the octonionic case the equality $\NN(a\bar b)=\NN(a)\NN(b)$ holds in any composition algebra; the only step requiring care is the well-definedness of the formula $\Hopfmap_{A}$ on $\OO\mathbb{P}^{1}$, which uses Artin's two-generated associativity to interpret $a\bar b$ unambiguously inside $\langle a,b\rangle$.
\end{proof}

Lemma~\ref{lem:hopf-norm} implies that $\Hopfmap_{A}$ maps $S^{2d-1}$ into $S^{d}$, recovering the four classical Hopf fibrations $S^{2d-1}\to S^{d}$ for $d\in\{1,2,4,8\}$.

\subsection{Restriction to the balanced shell}

\begin{proposition}[$\Hopfmap_{A}$ on $\Balanced(\Order)$ factors through the equator]\label{prop:hopf-equator}
For $(a,b)\in\Balanced(\Order)$, the second coordinate of $\Hopfmap_{A}(a,b)$ vanishes:
\[
\Hopfmap_{A}\bigl(\Balanced(\Order)\bigr)\subseteq S^{d-1}\times\{0\}\subset S^{d}.
\]
Identifying $S^{d-1}\times\{0\}\subset A\oplus\R$ with the unit sphere of $A$, the restriction $\Hopfmap_{A}\big|_{\Balanced(\Order)}$ coincides with the principal fibration $\pi(u,v)=u\bar v$ of $U\times U\to U$, up to the normalisation factor $\tfrac{1}{2}$ from the scaling of $\Balanced(\Order)$.
\end{proposition}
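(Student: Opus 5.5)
The proof is a direct computation. I parametrise a point of $\Balanced(\Order)$ as $(a,b)=\tfrac{1}{\sqrt 2}(u,v)$ with $u,v\in U=\Shell_{1}(\Order)$, so $\NN(u)=\NN(v)=1$. Since $\NN$ is a quadratic form, $\NN(a)=\tfrac12\NN(u)=\tfrac12$ and likewise $\NN(b)=\tfrac12$. The second coordinate $\NN(a)-\NN(b)$ of $\Hopfmap_{A}(a,b)$ therefore vanishes identically on $\Balanced(\Order)$.

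For the first coordinate, real scalars are central and commute with conjugation in every composition algebra, including $\OO$. Hence
\[
2a\bar b=2\cdot\tfrac{1}{\sqrt 2}\cdot\tfrac{1}{\sqrt 2}\,u\bar v=u\bar v .
\]
No associativity is needed here: only the bilinearity of the product is used, so the octonionic case requires no separate treatment at this step.

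Next I would invoke Lemma~\ref{lem:hopf-norm}, or directly multiplicativity $\NN(u\bar v)=\NN(u)\NN(\bar v)=1$. This shows $\Hopfmap_{A}(a,b)$ has norm $(\NN(a)+\NN(b))^{2}=1$, so it lies on $S^{d}$, and with vanishing last coordinate it lies on the equator $S^{d-1}\times\{0\}$. Under the identification of this equator with the unit sphere of $A$, the restriction sends $\tfrac{1}{\sqrt2}(u,v)\mapsto u\bar v=\pi(u,v)$.

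To see that $u\bar v\in U$, I would use $\bar\Order=\Order$ and $\Order\cdot\Order\subseteq\Order$ to get $u\bar v\in\Order$, and multiplicativity to get norm one. Finally $\bar v=v^{-1}$ because $v\bar v=\NN(v)=1$; in the octonions this holds by alternativity, since $v$ and $\bar v$ lie in the subalgebra generated by $v$. This justifies writing $\pi(u,v)=uv^{-1}$.

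The only delicate point is bookkeeping of the normalisation. The factor $\tfrac12$ in the statement arises if one evaluates $a\bar b$ without the $2$, or equivalently compares $a\bar b=\tfrac12 u\bar v$ with $\pi$. I will state explicitly that $\Hopfmap_{A}$ as defined, with the factor $2$, matches $\pi$ exactly, while the bare product $a\bar b$ equals $\tfrac12\pi(u,v)$. This reconciles the phrase ``up to the normalisation factor $\tfrac12$''.
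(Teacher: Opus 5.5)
Your proof is correct and follows the paper's argument. You parametrise $(a,b)=\tfrac{1}{\sqrt2}(u,v)$, observe $\NN(a)=\NN(b)=\tfrac12$ so the last coordinate vanishes, and compute $2a\bar b=u\bar v\in U$ with $\bar v=v^{-1}$. Your extra checks go slightly beyond the paper's own computation, and are consistent with it: you derive $u\bar v\in U$ from the order axioms and multiplicativity, and you note that with the factor $2$ in $\Hopfmap_{A}$ the match with $\pi$ is exact, the $\tfrac12$ arising only for the bare product $a\bar b$.
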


\begin{proof}
For $(a,b)=(\tfrac{u}{\sqrt 2},\tfrac{v}{\sqrt 2})\in\Balanced(\Order)$ with $u,v\in U$, $\NN(a)=\NN(u)/2=1/2=\NN(b)$, so the second coordinate $\NN(a)-\NN(b)=0$ vanishes. The first coordinate is $2a\bar b=2\cdot\tfrac{u}{\sqrt 2}\cdot\tfrac{\bar v}{\sqrt 2}=u\bar v\in U$ (since $U$ is closed under multiplication and conjugation). Reading $\bar v=v^{-1}$ on the unit shell, $u\bar v=uv^{-1}=\pi(u,v)$.
\end{proof}

\subsection{Finite principal fibration: associative case}

\begin{theorem}[Finite principal fibration, associative case]\label{thm:hopf-assoc}
Let $A$ be associative and $U=\Shell_{1}(\Order)\subset A^{\times}$ a finite subgroup. The map
\[
\pi\colon U\times U\to U,\qquad \pi(u,v)=uv^{-1},
\]
is surjective and exhibits $U\times U$ as a trivial principal $U$-fibration over $U$: every fibre has cardinality $|U|$, and the section $U\to U\times U$, $w\mapsto (w,1)$, is global.
\end{theorem}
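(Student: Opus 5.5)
The plan is to treat this as an elementary statement about a finite group $U$ acting on itself. First I would check that $\pi$ is well defined with values in $U$. Since $U=\Shell_{1}(\Order)$ is assumed to be a finite subgroup of $A^{\times}$, we have $v^{-1}\in U$ and $uv^{-1}\in U$ for all $u,v\in U$. This is consistent with Proposition~\ref{prop:hopf-equator}, where $v^{-1}=\bar v$ on the unit shell because $v\bar v=\NN(v)=1$. Surjectivity is immediate: for $w\in U$ we have $\pi(w,1)=w$, since $1\in\Order$ and $\NN(1)=1$ give $1\in U$. This same formula shows that $s(w)=(w,1)$ is a global section, $\pi\circ s=\mathrm{id}_{U}$.

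Next I would make the principal structure explicit. Let $U$ act on $U\times U$ on the right by the diagonal action $(u,v)\cdot g=(ug,vg)$. Associativity gives $\pi(ug,vg)=ug\,g^{-1}v^{-1}=uv^{-1}$, so the action preserves the fibres of $\pi$. The action is free, because $(ug,vg)=(u,v)$ forces $g=1$ by cancellation in the group. It is transitive on each fibre: if $uv^{-1}=u'v'^{-1}$, then $g=v^{-1}v'$ satisfies $vg=v'$ and $ug=uv^{-1}v'=u'v'^{-1}v'=u'$. Hence every fibre $\pi^{-1}(w)$ is a $U$-torsor, and in particular $|\pi^{-1}(w)|=|U|$. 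As a check, $|U|\cdot|U|=|U\times U|$, which matches the count $|\Balanced(\Order)|=|U|^{2}$ in Theorem~\ref{thm:balanced-product}.

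Finally, I would give the trivialisation $\Phi\colon U\times U\to U\times U$, $\Phi(w,g)=s(w)\cdot g=(wg,g)$. It is $U$-equivariant, and $\pi(\Phi(w,g))=wgg^{-1}=w$, so it commutes with the projections. It is a bijection, with inverse $(u,v)\mapsto(uv^{-1},v)$. So $\pi$ is isomorphic to the trivial bundle $U\times U\to U$ given by projection to the first factor.

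I do not expect a genuine obstacle; every step is group-theoretic bookkeeping. The one point that needs care is to record exactly where associativity is used: in fibre invariance and in the transitivity computation $uv^{-1}v'=u'$. This is what must be replaced by Artin's theorem in the Moufang case, Theorem~\ref{thm:hopf-moufang}.
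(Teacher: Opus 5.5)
Your proposal is correct and follows essentially the same route as the paper: the right-diagonal action $(u,v)\cdot g=(ug,vg)$ is free and transitive on the fibres of $\pi$, and the section $w\mapsto(w,1)$ trivialises the fibration. The paper reads off the fibre directly as $\pi^{-1}(w)=\{(wv,v):v\in U\}$. You instead get the fibre count from the torsor argument and write out the explicit trivialisation $(w,g)\mapsto(wg,g)$, which is a more detailed version of the same argument.
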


\begin{proof}
For $w\in U$, the fibre $\pi^{-1}(w)=\{(u,v)\in U\times U:uv^{-1}=w\}=\{(wv,v):v\in U\}$ has size $|U|$. The right-diagonal action of $U$ on $U\times U$, $(u,v)\cdot g=(ug,vg)$, has $\pi$ as its orbit map, free and transitive on fibres. The section $w\mapsto(w,1)$ trivialises the bundle.
\end{proof}

The Theorem applies to the seven associative cases of our list: $\Z$, $\Z[i]$, $\Z[\omega]$, $\Z[\zeta_{10}]$, Hamilton, Hurwitz, $\IcosianRing$.

\subsection{Finite principal fibration: Moufang case}

\begin{theorem}[Finite principal fibration, Moufang case]\label{thm:hopf-moufang}
Let $A$ be a real alternative composition algebra and let $U=\Shell_{1}(\Order)\subset A$ be a finite inverse-property Moufang sub-loop of the unit loop of $A$. The map
\[
\pi\colon U\times U\to U,\qquad \pi(u,v)=uv^{-1},
\]
is surjective and every fibre has cardinality $|U|$. The structure on $U\times U$ is that of a finite right-principal $U$-set with respect to the right-diagonal action.
\end{theorem}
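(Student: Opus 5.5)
The plan is to mimic the proof of Theorem~\ref{thm:hopf-assoc}, replacing every use of associativity by identities that hold in any inverse-property Moufang loop. These come from Artin's theorem: any two elements of the alternative algebra $A$ generate an associative subalgebra. We first record the facts needed. For $v\in U$ one has $\bar v=v^{-1}$, because $v\bar v=\NN(v)=1$ and $U$ is closed under conjugation. The right inverse property
\[
(xy)y^{-1}=x,\qquad (xy^{-1})y=x\qquad (x,y\in U)
\]
holds because each expression involves only the two elements $x,y$. By Artin's theorem these lie in an associative subalgebra, where the identities are trivial. No triple products are needed beyond this.

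Next, surjectivity and the fibres. Fix $w\in U$ and define $\sigma_w\colon U\to U\times U$ by $\sigma_w(v)=(wv,v)$. Closure of $U$ under multiplication puts $wv\in U$. The right inverse property gives $\pi(wv,v)=(wv)v^{-1}=w$, so $\sigma_w(U)\subseteq\pi^{-1}(w)$. In particular $\pi$ is surjective, since $\sigma_w(1)=(w,1)$.

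Conversely, let $(u,v)\in\pi^{-1}(w)$, so $uv^{-1}=w$. Then $wv=(uv^{-1})v=u$, again by the right inverse property. Hence $\pi^{-1}(w)=\{(wv,v):v\in U\}$. Because $\sigma_w$ is injective (its second coordinate is $v$), every fibre has exactly $|U|$ elements. As a consistency check, $|U\times U|=|U|\cdot|U|$, matching $|\Balanced(\Order)|$ in Theorem~\ref{thm:balanced-product}.

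For the principal-set structure, we use the right-diagonal "action" $(u,v)\cdot g=(ug,vg)$. In a nonassociative loop this is not a group action, so we only claim what the statement asserts. The claim is that each fibre is a single orbit-type set on which $U$ acts simply transitively via the parametrisation $\sigma_w$, and that $w\mapsto(w,1)$ is a global section.

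The main obstacle is that $\pi$ need not be invariant under $(u,v)\mapsto(ug,vg)$. We would have to show $(ug)(vg)^{-1}=uv^{-1}$, which involves three generators $u,v,g$, so Artin's theorem does not apply. I would therefore not rely on invariance. Instead, I would define the principal structure fibrewise: $U$ acts on $\pi^{-1}(w)$ by transporting right multiplication along the bijection $\sigma_w$. Freeness and transitivity then follow from the bijectivity of $\sigma_w$.

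I would add a remark that the genuine diagonal formula works when $u$ and $v$ lie in a common associative subloop with $g$. This happens, for instance, on the fibre over $1$, where $u=v$ and $(vg)(vg)^{-1}=1$. This is the sense in which the Moufang structure is only right-principal rather than a trivial principal bundle of a group.
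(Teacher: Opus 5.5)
Your fibre computation is the paper's proof. The paper also fixes $w$ and shows $\pi^{-1}(w)=\{(wv,v):v\in U\}$ by two inclusions: a forward one inside $\langle v,w\rangle$ and a backward one inside $\langle u,v\rangle$. Both are justified by Artin's theorem, and the paper then reads off $|\pi^{-1}(w)|=|U|$. Packaging the two Artin steps as the right inverse property, together with the injective parametrisation $\sigma_{w}$, is the same argument stated more cleanly. It also makes visible that Artin is used in both inclusions, not only in the backward one as Remark~\ref{rem:artin-importance} says.

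The real difference is the last sentence of the statement, on which the paper's proof is silent. Your suspicion there is correct and can be made sharp. By the right inverse property, $\pi\bigl((wv)g,vg\bigr)=\bigl((wv)g\bigr)(vg)^{-1}$ equals $w$ if and only if $(wv)g=w(vg)$. So the right-diagonal map $(u,v)\mapsto(ug,vg)$ preserves every fibre of $\pi$ exactly when $U$ is associative, i.e.\ a group.

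The loops the theorem targets are not. Any multiplicatively closed $U$ spanning $\OO$ is non-associative, because the associator is trilinear. Concretely, with the paper's Fano triples, $(e_{1}e_{2})e_{4}=e_{7}=-e_{1}(e_{2}e_{4})$. So for the Cayley--Graves loop and for $\Mouf_{240}(E_{8})$, the clause ``right-principal $U$-set with respect to the right-diagonal action'' is false as literally written, and no proof can deliver it.

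Your fibrewise structure is a sensible repair: right multiplication of $U$ on itself, transported along $\sigma_{w}$. It is simply transitive in the loop sense, since $vg=v'$ has the unique solution $g=v^{-1}v'$, though it is not a group action. You should present it explicitly as a correction of the statement, rather than saying you claim ``only what the statement asserts''.
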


\begin{proof}
By Artin's theorem (Schafer~\cite{Schafer} Thm.~3.1; Springer--Veldkamp~\cite{SpringerVeldkamp} \S1.6), any subalgebra of an alternative algebra generated by two elements is associative.

Fix $w\in U$. We argue that $\pi^{-1}(w)=\{(wv,v):v\in U\}$.

\emph{Forward inclusion.} Take $v\in U$ and consider the subalgebra $A_{v,w}=\langle v,w\rangle\subseteq A$. By Artin, $A_{v,w}$ is associative. Inside $A_{v,w}$, $uv^{-1}=w$ has the unique solution $u=wv$; the product $wv$ lies in $U$ because $U$ is multiplicatively closed. The inverse property of $U$, applied to $v$, gives $(wv)v^{-1}=w$, confirming $\pi(wv,v)=w$.

\emph{Backward inclusion.} Take $(u,v)\in U\times U$ with $uv^{-1}=w$. Multiply on the right by $v$ inside the associative subalgebra $\langle u,v\rangle$: $(uv^{-1})v=u(v^{-1}v)=u$. Hence $u=wv$.

Combining the two inclusions, $|\pi^{-1}(w)|=|U|$.
\end{proof}

\begin{remark}[The Moufang fibration is the genuine theorem]\label{rem:artin-importance}
The proof of Theorem~\ref{thm:hopf-moufang} relies on Artin's theorem in a single place: the reassociation $(uv^{-1})v=u(v^{-1}v)$ in the backward inclusion. Without two-generated associativity, this cancellation can fail in alternative algebras. The article's contribution is not the abstract statement (which becomes routine once Artin is granted) but the explicit identification: the Coxeter--Dickson Moufang loop $\Mouf_{240}(E_{8})$ admits a trivial principal $\Mouf_{240}$-fibration over itself, and this fibration is the natural \emph{finite} object that the integral plane $\Order^{2}$ exposes.
\end{remark}

\begin{remark}[On the language of "Hopf"]\label{rem:hopf-language}
The map $\pi(u,v)=uv^{-1}$ is \emph{not} the Hopf map $\Hopfmap_{A}\colon S^{2d-1}\to S^{d}$: the codomain of $\pi$ is the unit shell $U\subset S^{d-1}$ (= equator of $S^{d}$), not $S^{d}$. Calling $\pi$ "the Hopf map" would be a category error. The relationship between the two is given by Proposition~\ref{prop:hopf-equator}: $\Hopfmap_{A}\big|_{\Balanced(\Order)}$ takes its image in the equator and, on the equator, coincides with $\pi$. This identification is the content of \S\ref{sec:hopf} and it is what makes the principal fibration of Theorems~\ref{thm:hopf-assoc} and~\ref{thm:hopf-moufang} a genuine restriction of the continuous picture.
\end{remark}

\subsection{Hopf image profile}

\begin{remark}[Image of $\Hopfmap_{A}$ on the balanced shell]\label{rem:hopf-image-profile}
By Proposition~\ref{prop:hopf-equator}, the Hopf image
\[
\HI(\Order):=\Hopfmap_{A}\bigl(\Balanced(\Order)\bigr)
\]
is exactly the unit shell $U$ embedded in the equator $S^{d-1}\times\{0\}$, with multiplicity profile $\mu(w)=|\pi^{-1}(w)|$. By Theorems~\ref{thm:hopf-assoc} and~\ref{thm:hopf-moufang}, $\mu(w)=|U|$ for all $w$ whenever $U$ is a finite group or inverse-property Moufang sub-loop. For all nine multiplicative cases of Table~\ref{tab:main}, the multiplicity is therefore constant. The reproducibility script \texttt{script\_hopf\_image\_v1.py} verifies the norm identity Lemma~\ref{lem:hopf-norm}, confirms the multiplicity profile, and outputs a canonical JSON certificate.
\end{remark}

\section{Reproducibility}
\label{sec:reproducibility}

The computational content of the article is all available at the GitHub repository \texttt{github.com/DCorradetti/integral-planes-unit-norm-polytopes}. All code was done by Claude Opus 4.7 (Max Effort) in python. All arithmetic is exact: integer arithmetic over $\Z$, exact $\Z[\golden]$ arithmetic for the icosian case, and exact Fraction-coordinate octonion multiplication for the Cayley-Graves case.  Each script writes a canonical JSON certificate alongside a timestamped run folder with a SHA-256 manifest.   

\section{Conclusions and Future Developments}
\label{sec:conclusions}

We have introduced the integral plane $\Order^{2}$ over an integral order in a real composition algebra, defined its axis and balanced shells, and proved a uniform classification of the resulting unit-norm polytopes: ten crystallographic orders give the orthogonal-direct-sum root systems $2A_{1}$, $A_{2}\oplus A_{2}$, $4A_{1}$, $D_{4}\oplus D_{4}$, $16A_{1}$, $E_{8}\oplus E_{8}$; two non-crystallographic orders give $H_{2}\oplus H_{2}$ and $H_{4}\oplus H_{4}$ over $\Z[\golden]$. We have separated, throughout, the algebraic root-system identification from the geometric polytope-name identification, correcting a small but recurrent confusion in the literature between the cross-polytope $\beta_{n}$ (a polytope) and the root system $nA_{1}$.

We then have proved, by a purely Coxeter-theoretic argument, that no indecomposable rank-eight golden octonion order can exist (Corollary~\ref{cor:G3}). On the balanced shell side, we showed that the algebraic Hopf map $\Hopfmap_{A}(a,b)=(2a\bar b,\NN(a)-\NN(b))$ restricted to the balanced shell of an integral order takes its image on the equator of $S^{d}$ and there coincides with the finite principal fibration $\pi(u,v)=uv^{-1}$; the latter is a trivial principal $U$-fibration of $U\times U$ over $U$, valid in the alternative Moufang case via Artin's two-generated associativity (Theorem~\ref{thm:hopf-moufang}).

Several directions are natural and we plan to address them in forthcoming work.

First of all, beyond the analytic theorem, one would like a fully enumerative confirmation of the Moufang principal fibration on $\Mouf_{240}(E_{8})$ in an explicit convention for the twisted product. The script-level verification is conceptually straightforward but depends on a careful basis choice; we report this as a follow-up reproducibility task.
On the other hand, the rank-obstruction theorem (Theorem~\ref{thm:rank-obstruction}) closes Definition~\ref{def:root-shell-system} as a vehicle for indecomposable golden octonion orders of rank $\ge 5$. Nevertheless, our remark~\ref{rem:alternative-defs} listed four legitimate ways to weaken the definition: dropping single-length (R2), weakening reflection closure (R3), changing the Cartan-coefficient ring (R4), or replacing the reflection-group structure with a multiplicative-only finite inverse-property Moufang sub-loop. Each of these is a genuine open programme. The most promising is, in our taste, the multiplicative-only setting: it would describe \emph{finite IP Moufang sub-loops of the octonion unit loop over $\Z[\golden]$}, of which the icosian shell $H_{4}$ is the prototype; the Coxeter--Dickson Moufang loop $\Mouf_{240}(E_{8})$ is its crystallographic counterpart. A classification of such sub-loops over $\Z[\golden]$ would be the natural next discovery.

A second alternative is to allow two root lengths and to ask whether a rank-eight non-crystallographic quasi-system over $\Z[\golden]$ can be realised. Theorem~\ref{thm:rank-obstruction} does not exclude this case (it relies on the single-length axiom). The natural candidate would be a $\golden$-modification of the $F_{4}$ root system or of a quasi-$E_{8}$ configuration; we have not pursued the question.

Finally, the integral plane studied here is affine. Its projective companion, $\mathbb{P}(\Order^{2})$, has been studied for crystallographic orders in connection with the magic square and the exceptional Lie groups (Springer--Veldkamp~\cite{SpringerVeldkamp}, ch.~4); the non-crystallographic counterpart is essentially absent from the literature.

All in all, we have shown that the integral plane $\Order^{2}$, deceptively simple as a definition, encodes both the doubling of root-shell polytopes and the principal fibration of the corresponding unit loops, and that the existence question for an indecomposable rank-eight golden octonion order is settled in the negative by a clean Coxeter-theoretic argument. We hope the synoptic Table~\ref{tab:main} and the explicit rank-obstruction theorem will serve future investigators as a map of a small but rich corner of the geometry of composition algebras.

\section*{Acknowledgments}

The author thanks Richard Clawson and Raymond Aschheim for discussions on Hopf fibrations and Integral numbers. The author would also like to thank David Chester, Fang Fang, Marcelo Amaral and mainly Klee Irwin for ideas and discussions on lattices and root systems. This work originated within the C5C program at QGR, a programme that never quite found the organic beginning it deserved.


\end{document}